\newtheorem{theorem}{Theorem}[section]
\newtheorem{proposition}[theorem]{Proposition}
\newtheorem{corollary}[theorem]{corollary}
\theoremstyle{definition}
\newtheorem{definition}[theorem]{Definition}
\theoremstyle{remark}
\newtheorem{remark}[theorem]{Remark}
\numberwithin{equation}{section}
\newcommand{\bbR}{{\mathbb{R}}}
\newcommand{\cA}{\mathcal{A}}       
\newcommand{\cB}{\mathcal{B}}
\newcommand{\cD}{\mathcal{D}}
\newcommand{\cH}{\mathcal{H}}
\newcommand{\cP}{\mathcal{P}}
\newcommand{\Aut}{\operatorname{Aut}}
\newcommand{\End}{\operatorname{End}}
\newcommand{\Nm}[1]{\left\| #1\right\|}                        %
\newcommand{\Ad}{\mbox{Ad}\,}
\def\[{\ifmmode{[\hskip -0.2em [}
    \else{\hbox{$[\hskip -0.2em [$}}\fi}
\def\]{\ifmmode{]\hskip -0.2em ]}
    \else{\hbox{$[\hskip -0.2em ]$}}\fi}
\begin{document}
\title[An extension of Stone's Theorem]{Survey on a quantum stochastic extension\\ of Stone's Theorem}
%    Information for first author
\author{Claus K\"ostler}
%    Address of record for the research reported here
\address{Department of Mathematics and Statistics, Queen's University, 
Kingston, Ontario K7L3N6, Canada}
%    Current address
%\curraddr{Department of Mathematics and Statistics,
%Queen's University, Kingston, Ontario K7L3N6, Canada}
\email{koestler@mast.queensu.ca}
%    \thanks will become a 1st page footnote.
\thanks{This paper is an extended version of a talk given at the Conference 
\textit{Advances in Quantum Dynamics} in Mount Holyoke (June 2002).
This work was partially joint with J\"urgen Hellmich and Burkhard K\"ummerer.
We are grateful to Roland Speicher for several fruitful discussions.}

%    Information for second author
%\author{Author Two}
%\address{Mathematical Research Section, School of Mathematical Sciences,
%Australian National University, Canberra ACT 2601, Australia}
%\email{two@maths.univ.edu.au}
%\thanks{Support information for the second author.}

%    General info
\subjclass{Primary 46L53, 46L55; Secondary 81S25, 60H40}
\date{November 1, 2002. } 

%\dedicatory{This paper is dedicated to our advisors.}

\keywords{non-commutative probability, non-commutative martingales, non-commutative L\'evy
processes, non-commutative Markov processes, quantum stochastic
integration}

\begin{abstract}
From K\"ummerer's investigations on stationary Markov processes has emerged
an operator algebraic definition of white noises which captures many examples from classical 
as well as from non-commutative probability. Within non-commutative $L^p$-spaces associated to a white noise,
the role of (non-)commutative L\'evy processes is played by additive cocycles for the white noise shift, 
and moreover, the notion for exponentials of classical L\'evy processes is generalized by 
unitary cocycles. As a main result we report a bijective correspondence between additive and 
unitary cocycles for white noise shifts. If the cocycles are required to be differentiable, 
the presented correspondence reduces to Stone's theorem (for norm continuous unitary groups).

The correspondence needs the development of background results for additive cocycles with 
$L^\infty$-bounded covariance operators: an operator-valued stochastic 
It\^o integration, quadratic variations and non-commutative martingale inequalities as well as 
stochastic differentiation. Related results and recent progress towards the 
case of additive cocycles with unbounded variance operators are reported.  
 
\end{abstract}

\maketitle

\specialsection{Introduction}
\noindent
The developments of non-commutative (or quantum) probability led to many examples of 
non-commutative analogues of Brownian motion or, more generally speaking, L\'evy processes. 
They are realized through creation and annihilation operators on deformed Fock spaces. Motivated
by Classical Probability a natural question arises in quantum probability: 
do non-commutative Markov processes appear as solutions of quantum stochastic differential equations
when the increments are given by non-commutative analogues of L\'evy processes?

An early answer to this question was given in the 80's by Hudson and Partha\-sarathy for the Bosonic
Fock space, as well as Barnett, Streater and Wilde for the Fermionic Fock space,  
and was followed by the work of many others (cf.\ \cite{Meye95a,Part92a} and references therein). 
They succeeded in extending the stochastic It\^o integral, in particular, to creation and annihilation 
operators as increments. Moreover, quantum Markov processes were constructed as solutions of Bosonic or 
Fermionic quantum stochastic differential equations. Beginning in the 90's, 
a third example appeared on the scene with Voiculesu's Free Probability \cite{VDN93a}: free Brownian 
motion which also provides a rich stochastic calculus for the construction of Markov processes \cite{KuSp92a}. 
Meanwhile, the stochastic calculi of these examples have been further developed and of 
particular importance for the present approach are \cite{ApFr86a,BSW83c,BiSp98a,HHKKR}. Investigations on other examples of 
non-commutative analogues of Brownian motion on Fock spaces, e.g., the family of $q$-Brownian motions,
become recently a subject of increasing interest (cf. \cite{BoSp91a,BoSp94a,BKS97a,GuMa02a,Krol02a} and 
references therein).

At about the same time in the 80's, K\"ummerer followed an operator algebraic approach to
stationary Markov processes \cite{Kuem85a}. His investigations showed  
that many of these processes can be realized as so called couplings $\operatorname{Ad} u_t$ to a stationary 
white noise shift $S_t$ \cite{Kuem93a,Kuem96a}. Here the shift $S_t$ is an 
automorphism group acting on a von Neumann algebra $\cA$ and the coupling is given by a unitary 
cocycle $u_t$ for $S_t$. Consequently, the Markovian evolution appears as 
$\cA \ni x \mapsto \operatorname{Ad}u_t \, S_t(x) = u_t^*S_t(x) u_t$. From this approach the question arises whether a unitary 
cocycle $u_t$ for a white noise shift $S_t$ can still be identified as the solution of a certain 
stochastic differential equation $du_t = db_t u_t$. Here the increments $db_t$ are provided
by an additive cocycle $b_t$ (for $S_t$), the operator algebraic generalization of a classical L\'evy 
process. This question was affirmatively answered in \cite{HKK98a}, in the form of a bijective 
correspondence between additive and unitary cocycles. In this survey we present this result in an 
improved form within the framework of non-commutative $L^p$-spaces associated with a trace.
We emphasize that the result relies only on the operator algebraic white noise and its 
cocycles. A priori, it does not refer to any kind of examples on Fock spaces. Nethertheless, 
(deformed) Fock spaces provide rich sources for examples of white noises and additive cocycles
\cite{Kuem96a,BoGu02a}.

For the purpose of this survey we restrict our attention to white noises with non-commutative 
$L^p$-spaces associated with a trace. Appropriate generalizations of the results to white noises
on von Neumann algebras of type III are available by passing to Haagerup's $L^p$-spaces. 

After introducing necessary notations and properties of non-commutative probability as well as
non-commutative $L^p$-spaces in Section \ref{preliminaries}, we proceed in Section \ref{whitenoise} 
with the crucial definition of operator-valued white noise. 
The presented definition of white noise includes a suitable notion of 
independence, similar to the structure of commuting squares in subfactor theory \cite{GHJ89a}. 
Moreover, additive and unitary cocycles for white noise shifts are defined. They will play the part 
of operator-valued L\'evy processes  resp.\ their unitary exponentials. Finally, the introduced 
framework is justified by a result of K\"ummerer \cite{Kuem93a} which states that many 
non-commutative Markov processes come from cocycle pertubations of white noises.  

The main result is contained in Section \ref{mainresult}. In the presence of an operator-valued 
white noise, it states a bijective correspondence between additive and unitary cocycles for the 
white noise shift: Briefly formulating, $u_t$ is a unitary cocycle iff $b_t$ is an additive cocycle 
satisfying the structure equation $db_t^*db_t +db_t+db_t^*=0$. 
As a Corollary, Stone's Theorem appears when the cocycles are further required to be differentiable.  
Another familiar Corollary is the construction of stationary Markov processes from the pertubation of 
the white noise shift with a unitary cocycle. Due to the correspondence, the generator of the associated 
Markovian semigroup can easily be expressed in terms of the corresponding additive cocycle. A first, 
preliminary version of such a bijective correspondence is contained in \cite{HKK98a} using the language of 
Hilbert modules. The present approach, using non-commutative $L^p$-spaces, enables us to prove the existence 
of all moments of an additive cocycle whenever it comes from the correspondence \cite{Koes00a}.

To establish the correspondence, one has to construct a unitary cocycle from an additive one 
and vice versa. The first direction needs stochastic integration for additive 
cocycles of white noise shifts \cite{Koes00a} and constructs the unitary cocycle as the solution of
a certain stochastic differential equation. This approach combines the structure of an 
operator-valued white noise with general stochastic integration on non-commutative $L^2$-spaces 
as can be found in \cite{BSW87a}. We will only briefly review the related concepts in Section 
\ref{integration}, as far as it necessary for the formulation of the main result. 

Section \ref{variation} is devoted to quadratic variations of additive cocycles \cite{Koes02b}, 
which give rise to the non-commutative analogues of the famous It\^o corrections $db_t^*db_t$ 
in classical stochastic analysis. The stated results provide a rigorous formulation of the structure equation 
$db^*_t db_t + db +db_t^*=0$.  
Furthermore, we present estimates on the growth of higher moments of centred additive cocycles 
\cite{Koes02b}.
This gives applications to non-commutative martingale inequalities as they have been established in 
the work of Junge, Pisier and Xu for discretely indexed martingales in \cite{PiXu97a,JuXu01a}. 
   
In Section \ref{differentiation} we present the construction of additive cocycles from unitary
cocycles. We call this procedure stochastic differentiation and its idea is already present in 
the work of Skorokhod on operator-valued stochastic semigroups in Classical Probability \cite{Skor82a}. 
Independently, the idea reappeared for scalar-valued white noises (cf.~\cite{Kuem96a}).
We will present results on the existence of all moments 
of the stochastic derivative and the general case which leads to additive cocycles with 
$\tau$-measurable covariance operators. 
 
Finally, we investigate in Section \ref{flucdiss} the structure equation $db_t^*db_t + db_t + db^*_t=0$
and introduce flows of additive cocycles. Consequently, the structure equation
is identified as a specific case of a non-commutative fluctation-dissipation theorem. This provides 
sufficient information to control the unbounded generator of the Markovian semigroup.

\specialsection{Preliminaries}\label{preliminaries} 
\noindent
In the following we introduce non-commutative probability spaces, their morphisms,
filtrations and notions of non-commutative processes which we will use throughout this 
paper. For this survey we will limit our considerations to non-commutative $L^p$-spaces 
associated with a trace, but we emphasize that the presented setting carries over to 
Haagerup's $L^p$-spaces \cite{Haag79a,Terp81a}, after some suitable modifications.  

$\cA$ will always denote a finite von Neumann algebra with
norm separable predual $\cA_*$. A faithful normal tracial state on $\cA$ will be 
denoted by $\tau$. For $1\le p \le \infty$ the non-commutative $L^p$-spaces 
$L^p(\cA,\tau)$ are defined by the completion of $\cA$ in the norm 
$\Nm{x}_p = (\tau(|x|^p))^{1/p}$, $x\in \cA$, where $|x| = (x^*x)^{1/2}$.  
We recall that $L^\infty(\cA,\tau)$ is just $\cA$ with the usual operator norm and 
that $L^1(\cA,\tau)$ is isometrically isomorphic to $\cA_*$. Note that in the 
presented setting, a densely-defined closed operator affiliated with $\cA$ is already
$\tau$-measurable. We will denote all $\tau$-measurable operators by $L^0(\cA,\tau)$. Moreover, 
the trace $\tau$ extends to a positive tracial functional
on the positive part $|x|$ of all $\tau$-measureable operators $x$ and its 
extension will still be denoted by $\tau$. Note that $x\in L^p(\cA,\tau)$ iff
$\tau(|x|^p) < \infty$ ($1\le p<\infty$). For the definitions of $\tau$-measurability
and further details on non-commutative $L^p$-spaces we refer the reader to \cite{PiXu02a} 
and the references therein. 

In the sequel the pair $(\cA,\tau)$ and its associated non-commutative $L^p$-spaces
will be called (non-commutative) probability spaces. A morphism $T: (\cA,\tau) \to 
(\tilde{\cA},\tilde{\tau})$ between two probability spaces
is a completely positive operator such that $\tilde{\tau}\circ T = \tau$
and $T(1_\cA)=1_{\tilde{\cA}}$. Note that a morphism $T$ extends to a contraction 
from $L^p(\cA,\tau)$ to $L^p(\tilde{\cA},\tilde{\tau})$ ($1\le p <\infty$), denoted 
also by $T$. Endomorphism and automorphism of a probability space are accordingly 
understood and denoted by $\End(\cA,\tau)$ resp.\ $\Aut(\cA,\tau)$. 
If $\cB \subset \cA$ is a von Neumann subalgebra it is well known that there exists a 
unique conditional expectation $E\colon \cA \to \cB$ as endomorphism. It extends
to a contractive projection from $L^p(\cA,\tau)$ onto $L^p(\cB,\tau)$,
which is still called a conditional expectation. Moreover, $L^p(\cB,\tau)$ is always 
naturally identified with a subspace of $L^p(\cA,\tau)$. 

A family of von Neumann subalgebras $(\cA_I)_I\subset \cA$, indexed by 
'intervals' $I=[s,t]$ with $-\infty \le s\le t \le \infty$, is called a 
{\em filtration} of $(\cA,\tau)$ if $\cA_{I}$ and $\cA_J$ generates $\cA_{K}$ 
whenever $I\cup J = K$.  The filtration $(\cA_I)_I$ is called {\em minimal}
if $\cA_\bbR = \cA$ and {\em lower} (resp.\ {\em upper}) {\em continuous} if 
$\bigcup_{I \subset J^\circ} A_I$ generates $A_J$ 
(resp.~$\bigcap_{I^\circ \supset I} A_J =\cA_I$) as von Neumann algebra. Here we 
used the convention $[s,t]^\circ = (s,t)$ for $s<t$ and $ [s,s]^\circ = \emptyset$. 
Let $E_I$ denote the conditional expectation from $\cA$ onto $\cA_I$. The filtration 
$(\cA_I)_I$ is 
called {\em $\cA_0$-Markov filtration} if $E_{(-\infty,0]} E_{[0,\infty)} = E_0$.  
Note that the filtration and its related conditional expectations extend to 
non-commutative $L^p$-spaces for $1\le p< \infty$ and will be called the same.

Let $\cA_0\subset \cA$ be a distinguished von Neumann subalgebra and let 
$T\equiv (T_t)_{t \in \bbR}\subset \Aut(\cA,\tau)$ be a pointwise weakly* continuous  
group. The tuple $(\cA,\tau,T;\cA_0)$ is called a {\em $\cA_0$-valued (stationary)
stochastic process}. The group $T$ and $\cA_0$ induce a continuous
filtration $(\cA_I)_{I}$ on the probability space $(\cA,\tau)$ through the von Neumann
subalgebras $\cA_I$ generated by $\bigcup_{t \in I}T_t(\cA_0)$. 
If $(\cA_I)_I$ is a $\cA_0$-Markov filtration then $(\cA,\tau,T;\cA_0)$ is called
a {(stationary) $\cA_0$-Markov process}. Note that in this case 
the compressions $R = E_0T E_0 \in \End(\cA_0,\tau)$ define a semigroup of 
contractions on $\cA_0$. Again, these notions extend canonically to 
non-commutative $L^p$-spaces.

Finally, following the approach of K\"ummerer \cite{Kuem93a}, we formulate the non-commutative 
counterpart to operator-valued classical stationary generalized processes. We start with a filtration
$(\cA_I)_I \subset \cA$ (which contains $\cA_0$) and a pointwise weakly* continuous 
group $T\subset \Aut(\cA,\tau)$ such that both are compatible, i.e., 
$T_t(\cA_{[r,s]})= \cA_{[r+t,s+t]}$ for any $-\infty \le r\le s\le \infty$ and 
$t \in \bbR$. Then $(\cA,\tau,T;(\cA_I))$ is called  {\em $\cA_0$-valued 
generalized (stationary) process}. By the continuity of the automorphism group
its filtration is automatically lower continuous. Upper continuity follows for 
the families $(\cA_{(-\infty,t]})_{t\in \bbR}$ and $(\cA_{[t,\infty)})_{t\in \bbR}$. 
The generalized stationary process $(\cA,\tau,T;(\cA_I)_I)$ is called  
{\em $\cA_0$-valued noise} if the filtration $(\cA_I)_I$ is minimal and $\cA_0$ is
the fixed point algebra of $T$. This notions extend as before to non-commutative $L^p$-spaces. 
We will use the same notation for them.

\specialsection{White noises and their cocycles}\label{whitenoise}
\noindent
We will present the non-commutative version of an 
operator-valued classical white noise. 
Recall that $\cA$ denotes a finite von Neumann algebra which is equipped with
a faithful normal tracial state $\tau$ and a minimal filtration $(\cA_I)_I$. 
We remind that $E_I$ denotes the conditional expectation from $\cA$ onto $\cA_I$.   
Finally, $S$ denotes an automorphism group which is compatible with the 
given filtration and has the fixed point algebra $\cA_0$. For the precise 
definition of a noise we refer to the preliminaries. 
\begin{definition}\label{noise}
 A noise $(\cA,\tau,S;(\cA_I)_I)$ is called a 
{\em($\cA_0$-valued) white noise} if $E_I \circ E_J = E_0$ whenever 
$I^\circ\cap J^\circ =\emptyset$. The von Neumann subalgebras $\cA_I$ 
and $\cA_J$ of $\cA$ are called {\em independent (over $\cA_0$)} if 
$I^\circ\cap J^\circ= \emptyset$ implies $E_I \circ E_J = E_0$.  
\end{definition}
If the von Neumann algebra $\cA$ is commutative, one gets back classical examples 
of white noises (cf.\ \cite{Hida80a} for the example of Gaussian white noise and \cite{HKK98a} for its
reformulation to a white noise in our sense).  
Crucial for the noise to be 'white' is the notion of independence which parallels 
commuting squares in subfactor theory \cite{GHJ89a}. Moreover, it
reduces to the classical notion of stochastic independence whenever
the underlying von Neumann algebra is commutative and the noise scalar-valued. 

Throughout the paper we will always assume that an $\cA_0$-valued white noise 
$(\cA,\tau, S; (A_I)_I)$ and its canonical extensions to
non-commutative $L^p$-spaces ($1\le p \le \infty$) are given.
For brevity, 
the automorphism group of the white noise and its extensions to the 
associated non-commutative
$L^p$-spaces will just be called {\em shift} $S$. 
\begin{remark}
(i) The notion of white noise does not refer to Fock spaces. However, many 
examples of non-commutative white noises come along with generalized 
Brownian motions on deformed Fock spaces
\cite{BoSp94a,BKS97a,HKK98a,Koes00a,HHKKR,GuMa02a,BoGu02a,Krol02a,Hell02a,HKK02a}.
Other examples come from classical compound
processes which trigger non-commutative Bernoulli shifts
\cite{Kuem87a,Rupp91a,Hell02a,HKK02a}.

(ii) All examples of white noises constructed from so called white noise 
functors satisfy $E_I E_J = E_{I\cap J}$ \cite{Kuem96a,GuMa02a}. This implies
independence as stated in definition \ref{noise}. The converse is an open problem.
\end{remark} 
We define the analogue of an operator-valued class of classical L\'evy processes. 
\begin{definition}
Let $1 \le p \le \infty$. An {\em additive $L^p$-cocycle} $b$ for the shift 
$S$ is a strongly continuous family $(b_t)_{t\ge 0}$ in $L^p(\cA,\tau)$ such that
\begin{enumerate}
\item[(i)] $b_t \in L^p(\cA_{[0,t]},\tau)$ (adaptedness);
\item[(ii)] $b_{s+t} = S_t(b_s) + b_t$ for all $s,t \ge 0$ (cocycle identity). 
\end{enumerate}
If $E_0(b_t)=0$ for any $t\ge 0$ then $b$ is called {\em centred}. If $E_0(b_t)=b_t$
then $b$ is called {\em drift}. $E_0(b_1^*b_1)$ is the {\em covariance (operator)} of $b$. 
\end{definition}
Frequently, an additive $L^p$-cocycle will also be called 
a {\em $L^p$-L\'evy process}. 
A centred additive $L^p$-cocycle $b$ is automatically
continuous since it is a martingale with respect to the continuous filtration 
$(\cA_{(-\infty,t]})_{t>0}$. Moreover, the continuity of a centred additive 
$L^p$-cocycle $c$ implies the identity
$$E_0(c_t^* x c_t) = E_0(c_1^*x c_1) t$$
for all $t>0$, whenever $x \in L^q(\cA_0,\tau)$ and $1 \ge  2/p + 1/q$. The equality features
that additive cocycles lead to operator-valued Lebesgue measures. This
observation is a cornerstone for a non-commutative version of the It\^o integral
(cf.~ Section \ref{integration}).

Note that the continuity assumption is only needed for a drift and
insures that a drift is always of the form $(td )_{t\ge 0}$ for some 
$d \in L^p(\cA_0,\tau)$. We remark that an additive cocycle $b$
uniquely decomposes
into a centred additive cocycle $c$ and a drift $d$ such 
that $b_t = c_t + td $ for
 any $t\ge 0$. Our main result will require an additive $L^2$-cocycle
$b$ with a symmetric covariance
$E_0(b^*_1b_1 + b_1b_1^*) \in L^\infty(\cA_0,\tau)$.

Let us now present the non-commutative analogue of 
exponentials of L\'evy processes on the unit circle.   
\begin{definition}
A {\em unitary cocycle} $u$ for the shift $S$ is a weakly* continuous 
family of unitary operators $(u_t)_{t\ge 0}\in L^\infty(\cA,\tau)$ 
with the following properties:
\begin{enumerate}
\item[(i)] $u_t \in \cA_{[0,t]}$ (adaptedness);
\item[(ii)] $u_{s+t} = S_t(u_s) u_t$ for all $s,t \ge 0$ (cocycle identity). 
\end{enumerate}
\end{definition}
An immediate consequence of (i) and (ii) is that 
$t\mapsto E_0(u_t) \subset L^\infty(\cA_0,\tau)$ defines a strongly continuous 
semigroup of contractions on $L^p(\cA_0,\tau)$ ($1\le p\le \infty$). For our main 
result we will assume that $E_0(u_t)$ is uniformly continuous. Note that 
$E_0(u)$ is uniformly continuous iff its generator $d$ is an operator in 
$L^\infty(\cA_0,\tau)$. The more general case will be further considered in 
Section \ref{differentiation}.

Stationary Markov processes appear now as follows. Let us extend the unitary cocycle
$u$ to negative times by defining $u_t = S_t u_{-t}^*$ for $t<0$ and let $\Ad u_t(x)
= u_t^*x u_t$ for any $x \in L^\infty(\cA,\tau)$. 
\begin{proposition}
Let $(\cA,\tau, S,(\cA_I)_I)$ be an $\cA_0$-valued white noise with unitary cocycle
$u$. Then $(\cA, \tau, \operatorname{Ad}u \, S; \cA_0)$ is a stationary Markov process with values in 
$\cA_0$.
\end{proposition}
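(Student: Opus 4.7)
The plan is to verify the three defining ingredients of a stationary $\cA_0$-Markov process in turn: that $T_t:=\operatorname{Ad}u_t\circ S_t$ extends to a pointwise weakly* continuous one-parameter group in $\Aut(\cA,\tau)$; that the filtration $\cA^T_I:=\bigvee_{r\in I}T_r(\cA_0)$ which it induces sits inside the given white-noise filtration across the cut at $0$; and that the Markov property then descends from the independence stipulated in Definition \ref{noise}.

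For the first point, the cocycle identity $u_{s+t}=S_t(u_s)u_t$ together with the group law for $S$ gives $T_sT_t=T_{s+t}$ for $s,t\ge 0$, and the extension $u_{-t}:=S_{-t}(u_t^*)$ preserves the group law to all of $\bbR$ (most easily by using $T_{-t}=T_t^{-1}$); trace preservation is immediate since $\operatorname{Ad}u_t$ is inner and $\tau\circ S_t=\tau$, while pointwise weak* continuity reduces to separate weak* continuity of multiplication on the norm-bounded set of unitaries. For the second point, note that $S_t$ fixes $\cA_0$ pointwise, so $T_t(x)=u_t^* x u_t$ for $x\in\cA_0$; together with $u_t\in\cA_{[0,t]}$ for $t\ge 0$ (and $u_t\in\cA_{[t,0]}$ for $t\le 0$, by adaptedness and application of $S_t$), this forces $T_t(\cA_0)\subset\cA_{[\min(0,t),\max(0,t)]}$, hence $\cA^T_{(-\infty,0]}\subset\cA_{(-\infty,0]}$ and $\cA^T_{[0,\infty)}\subset\cA_{[0,\infty)}$, while $\cA_0=T_0(\cA_0)$ lies in both.

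For the Markov property, write $F_-,F_+$ (resp.\ $F^T_-,F^T_+$) for the conditional expectations onto $\cA_{(-\infty,0]}$ and $\cA_{[0,\infty)}$ (resp.\ their $T$-analogues), and $E_0$ for the one onto $\cA_0$. The inclusions from the previous step give $F^T_\pm=F^T_\pm F_\pm$ and $E_0=E_0 F^T_\pm$ by the tower property of conditional expectations. Given $x\in\cA$, set $y=F^T_+(x)$, which lies in $\cA_{[0,\infty)}$ so that $F_+(y)=y$; the white-noise identity $F_-F_+=E_0$ then yields $F_-(y)=E_0(y)$, and so $F^T_- F^T_+(x)=F^T_-(y)=F^T_- F_-(y)=F^T_- E_0(y)=E_0(y)=E_0(x)$, the last equality using $E_0=E_0 F^T_+$. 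This is precisely the $\cA_0$-Markov property. I expect the main obstacle to be bookkeeping among the three filtrations in this final step; once the refinement $\cA^T_\bullet\subset\cA_\bullet$ is observed, white-noise independence does the rest.
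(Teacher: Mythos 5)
Your proof is correct, and it follows the standard argument: the cocycle identity gives the group law for $T_t=\operatorname{Ad}u_t\,S_t$, adaptedness of $u$ localizes $T_t(\cA_0)$ inside $\cA_{[0,t]}$ (resp.\ $\cA_{[t,0]}$), and the commuting-square property $E_{(-\infty,0]}E_{[0,\infty)}=E_0$ of the white noise then passes to the coarser induced filtration by the tower property. The paper states this proposition without proof, so there is nothing to compare against, but your bookkeeping among the two filtrations in the final step is exactly the point that needs to be checked and you handle it correctly.
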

Such a representation of a Markov process is called a {\em coupling to
white noise} \cite{Kuem93a}.
Finally, let us state a result from \cite{Kuem93a} which establishes the converse and
justifies the definition of a white noise. $M_n$ will denote the 
$n \times n$-matrices which are identified with their isomorphic
embedding in $\cA$. 

\begin{theorem}
Let $(\cA, \tau, T; M_n)$ be a stationary Markov process. 
If there exists a minimal projection $e \in M_n$ which is
invariant under $T$, then the Markov process is a coupling to a
$M_n$-valued white noise $(\cA,\tau, S, (\cA_I)_I)$.
\end{theorem}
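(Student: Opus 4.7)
The plan is to construct the white noise shift $S$ and the unitary cocycle $u$ explicitly from the given Markov data, using the minimal invariant projection $e$ to ``rigidify'' the $M_n$-valued system. Fix a system of matrix units $\{e_{ij}\}_{i,j=1}^n$ of $M_n$ with $e_{11}=e$. Because $T_t(e)=e$, define
\begin{equation*}
u_t \;:=\; \sum_{i=1}^n e_{i1}\, T_t(e_{1i}) \;\in\; \cA, \qquad t \ge 0.
\end{equation*}
A direct matrix unit calculation using $T_t(e_{1i}e_{j1}) = \delta_{ij}\, T_t(e_{11}) = \delta_{ij}\, e$ simultaneously yields unitarity $u_tu_t^*=u_t^*u_t=1$ and the intertwining $u_t^* x u_t = T_t(x)$ for all $x \in M_n$. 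I would then set $S_t(x) := u_t T_t(x) u_t^*$, so that $S_t \in \Aut(\cA,\tau)$, $S_t|_{M_n}=\Id$, and $T_t(x) = \Ad u_t\, S_t(x)$ by construction. The parallel identity $u_t\, T_t(u_s) = u_{s+t}$ (verified in the same way, once more using $T_t(e)=e$) delivers both the cocycle relation $u_{s+t} = S_t(u_s)\, u_t$ and the semigroup law $S_{s+t} = S_t\, S_s$. Extending $u_t$ to negative times by $u_t := S_t(u_{-t})^*$ makes $(S_t)_{t\in\bbR}$ a pointwise weakly* continuous automorphism group.

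For the white noise filtration, I would define $\cA_I$ to be the von Neumann algebra generated by $M_n$ together with $\{u_r u_s^* : r, s \in I\}$. The easy identity $S_v(u_r u_s^*) = u_{r+v}\, u_{s+v}^*$ together with $S_v|_{M_n} = \Id$ produces the compatibility $S_v(\cA_I) = \cA_{I+v}$, while adaptedness $u_t \in \cA_{[0,t]}$ is immediate from $u_t = u_t\, u_0^*$. Minimality $\cA_{\bbR}=\cA$ reduces to the minimality of the original Markov filtration, because the relation $T_t(e_{1j}) = e_{1j}\, u_t$ (a direct consequence of the definition of $u_t$ and $T_t(e)=e$) shows that the von Neumann algebra generated by $M_n$ and $\{u_t\}_{t\in\bbR}$ coincides with that generated by $M_n$ and $\{T_t(M_n)\}_{t\in\bbR}$. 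The filtration property $\cA_I \vee \cA_J = \cA_{I\cup J}$ whenever $I\cup J$ is an interval is evident from the factorization $u_r u_s^* = (u_r u_b^*)(u_b u_s^*)$ for $s\le b\le r$.

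The main obstacle is to verify the two remaining properties that distinguish a white noise from a general noise: that $M_n$ is the \emph{full} fixed point algebra of $S$, and that $E_I \circ E_J = E_0$ whenever $I^\circ \cap J^\circ = \emptyset$. Here the minimality of $e$ enters decisively. For the fixed point algebra, any $S$-invariant $x$ satisfies $T_t(x) = u_t^* x u_t$ for all $t$, and unravelling this through the matrix unit description of $u_t$ forces $x$ to commute with all $T_t(M_n)$; the minimality of the Markov filtration then places $x$ in the commutant of $\cA$, while the minimality of $e$ in $M_n$ confines it back to $M_n$. For the independence $E_I \circ E_J = E_0$, I would transport the Markov property $E_{(-\infty,0]} \circ E_{[0,\infty)} = E_0$ of the process $T$ to the $S$-filtration via conjugation by $u$: because $u_t$ is adapted to the intersection of both filtrations, this conjugation carries the $T$-conditional expectations onto the $S$-conditional expectations up to modifications that lie in $M_n$, and stationarity then upgrades the one-sided Markov identity to the two-sided factorization expressing independence over $M_n$ for arbitrary disjoint-interior intervals. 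This last step---passing from a one-sided Markov property to full independence over $M_n$ under stationarity and the minimality of $e$---is the technical heart of the argument.
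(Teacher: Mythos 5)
The paper itself offers no proof of this theorem: it is quoted verbatim from K\"ummerer's preprint [K\"u2], so there is no in-paper argument to measure you against. On its own terms, your construction is the standard and correct starting point: $u_t=\sum_i e_{i1}T_t(e_{1i})$, $S_t(x)=u_tT_t(x)u_t^*$, and the algebraic verifications (unitarity and $u_t^*xu_t=T_t(x)$ from $e_{1i}e_{j1}=\delta_{ij}e$ and $T_t(e)=e$; the identity $u_{s+t}=u_tT_t(u_s)$ giving both the semigroup law for $S$ and the cocycle law for $u$; the relation $T_t(e_{1j})=e_{1j}u_t$; compatibility of your filtration with $S$) all check out.

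The gaps sit exactly where the content of the theorem lies, namely in showing that $(\cA,\tau,S,(\cA_I)_I)$ is a \emph{white noise} and not merely a compatible filtration. First, your fixed-point argument contains a false step: you claim that $S_t(x)=x$, i.e.\ $T_t(x)=u_t^*xu_t$, ``forces $x$ to commute with all $T_t(M_n)$.'' But every $x\in M_n$ satisfies $T_t(x)=u_t^*xu_t$ --- that is your own intertwining identity --- while $M_n$ does not commute with $T_t(M_n)$ for a genuinely noncommutative Markov process; so whatever argument you have in mind proves too much and cannot be right as stated. Second, the independence $E_I\circ E_J=E_0$ for $I^\circ\cap J^\circ=\emptyset$, which is the defining property of white noise in Definition 3.1, is only gestured at. The difficulty is real: your local algebras $\cA^S_{[s,t]}=W^*\bigl(M_n,\{u_ru_{r'}^*: r,r'\in[s,t]\}\bigr)$ do not coincide with the Markov filtration $\cA^T_{[s,t]}=W^*\bigl(\{T_r(M_n): r\in[s,t]\}\bigr)$ (for instance $\cA^S_{[1,2]}$ contains $M_n$ but in general not $T_1(M_n)$, and conversely), so the one-sided Markov identity $E^T_{(-\infty,0]}E^T_{[0,\infty)}=E_0$ does not transport ``by conjugation'' in any evident way, and upgrading it to the two-sided factorization over $M_n$ for \emph{arbitrary} intervals with disjoint interiors is precisely what has to be proved. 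Declaring this step ``the technical heart'' without supplying it leaves the proof incomplete; as it stands you have constructed the candidate coupling but not verified that it couples to a white noise.
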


This result indicates how intimately white noises and Markov processes are
related and motivated investigations which led to the main result 
stated below.

\specialsection{The main result}\label{mainresult}
\noindent
We are now in the position to present our main result. From a
stochastic point of view it states that many Markov processes 
are driven by L\'evy processes. From the perspective of
functional analysis it is an extension of Stone's theorem to cocycles
of unitary groups.

\begin{theorem}\label{stone}
For an $\cA_0$-valued white noise $(\cA, \tau, S; (\cA_I)_I)$ there exists
a bijective correspondence between: 
\begin{itemize}
\item[(a)] $u \subset L^\infty(\cA,\tau)$ is a unitary adapted cocycle 
for $S$ such that its semigroup $E_0(u)$ is norm continuous in 
$L^\infty(\cA_0,\tau)$.
\item[(b)] $b\subset L^2(\cA,\tau)$ is an additive adapted cocycle for $S$ with 
symmetric covariance
$E_0(b^*b +bb^*) \subset L^\infty(\cA_0,\tau)$ that satisfies the 
structure equation
$$\[b,b\]+ b^*+b =0.$$
\end{itemize} 
The correspondence from \textup{(a)} to \textup{(b)} is given by the stochastic differentiation
$$
b_t = \lim_{N\to \infty} \sum_{j=0}^{N-1}S_{jt/N}(u_{t/N}-1)
$$
and from \textup{(b)} to \textup{(a)} by the stochastic differential equation
$$
u_t = 1 + \int_{s=0}^t db_s u_s.
$$
\end{theorem}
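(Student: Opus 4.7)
The plan is to establish the correspondence in both directions and then verify that the two constructions are mutually inverse. I would start with the direction (b)$\Rightarrow$(a), which is the more constructive half. Given an additive $L^2$-cocycle $b$ with bounded symmetric covariance satisfying the structure equation, I would solve the stochastic differential equation $u_t = 1 + \int_0^t db_s\, u_s$ by Picard iteration within the operator-valued It\^o calculus sketched in Section \ref{integration}. The $L^\infty$-bound on $E_0(b_1^*b_1 + b_1 b_1^*)$ is precisely what controls the iterates so that the limit $u_t$ lies in $L^\infty(\cA,\tau)$. Adaptedness is preserved along the iteration, and the cocycle identity $u_{s+t} = S_t(u_s) u_t$ follows from uniqueness of solutions combined with the additive cocycle identity $b_{s+t} = S_t(b_s) + b_t$.

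To obtain unitarity I would apply the non-commutative It\^o product rule from Section \ref{variation} to $u_t^*u_t$, obtaining formally $d(u_t^*u_t) = u_t^*(db_t^* + db_t + db_t^*\, db_t)u_t$, which vanishes precisely because of the infinitesimal structure equation $\[b,b\] + b^* + b = 0$. This yields $u_t^*u_t = 1$, and since $\cA$ is a finite von Neumann algebra with faithful normal trace $\tau$, any isometry is automatically unitary, so $u_t u_t^* = 1$ as well. This is the step where the structure equation plays its decisive role. Norm continuity of $E_0(u_\cdot)$ then follows by applying $E_0$ to the SDE and using the white-noise independence, which collapses $E_0(u_\cdot)$ to a semigroup on $L^\infty(\cA_0,\tau)$ whose generator is the drift of $b$, itself bounded thanks to the $L^\infty$ covariance assumption combined with the structure equation.

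For the direction (a)$\Rightarrow$(b) I would construct $b$ as the $L^2$-limit of the Riemann-type sums $b_t^{(N)} = \sum_{j=0}^{N-1} S_{jt/N}(u_{t/N} - 1)$. The white-noise property $E_I\circ E_J = E_0$ renders the cross terms in $\Nm{b_t^{(N)}}_2^2$, coming from the shift-disjoint intervals $[jt/N,(j+1)t/N]$, equal to products of conditional expectations involving $E_0(u_{t/N} - 1)$; the hypothesis that $E_0(u)$ is norm continuous forces these to be $O(1/N)$, keeping the total sum bounded and making the sequence Cauchy. Adaptedness and the additive cocycle identity for $b$ are inherited in the limit from the telescoping shift structure of $b_t^{(N)}$. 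The structure equation is then read off by expressing the identity $u_t^*u_t = 1$ through the constructed $b$ via the It\^o product rule of Section \ref{variation} and identifying the vanishing of the integrand in the infinitesimal limit.

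The main obstacle, in my view, is the convergence of the stochastic differentiation sum. The Picard side of (b)$\Rightarrow$(a) is fairly standard once the operator-valued It\^o integral is in place, and the unitarity check is algebraic modulo the non-commutative It\^o rule. By contrast, in (a)$\Rightarrow$(b) one must show that a highly oscillatory non-commutative Riemann sum converges in $L^2$, and this genuinely requires both the white-noise independence (to orthogonalize the centred parts via $E_I\circ E_J = E_0$) and the norm-continuity hypothesis (to control the drift contribution). Verifying that the two constructions are mutually inverse is then a comparatively routine reconciliation of the two limit procedures, amounting to checking that stochastically differentiating the Picard solution reproduces $b$ and that solving the SDE with the Riemann-sum limit reproduces $u$.
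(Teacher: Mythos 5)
Your proposal follows essentially the same route as the paper, which (being a survey) distributes the argument over Sections \ref{integration}--\ref{flucdiss}: Banach/Picard iteration for the SDE, with the bounded covariance making the It\^o integral a bounded map on $\cH([0,t])$; unitarity of the solution deduced from the structure equation via the quadratic-variation/integration-by-parts identity; and the stochastic differentiation limit established first in $L^2$ using the independence $E_I\circ E_J=E_0$, with the structure equation of the derivative coming from $u_t^*u_t=1$. One small correction: the $L^\infty$-bound on the covariance only controls the Picard iterates in $L^2$, so membership of $u_t$ in $L^\infty(\cA,\tau)$ is not a consequence of that bound but follows, as you yourself note afterwards, from $u_t^*u_t=1$ together with the fact that an isometry in a finite von Neumann algebra is unitary.
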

Here, $\[b,b\]$ denotes the quadratic variation of the additive cocycle $b$
(cf.~Section \ref{variation}), the definition and background of the stochastic differential equation
is presented in Section \ref{integration}. 
The main result reduces to Stone's theorem (for bounded generators) if we require that
the cocycles are differentiable. 

\begin{corollary}
If the unitary cocycle $u$ in \textup{(a)} or the additive cocycle $b$ in \textup{(b)} is differentiable
then $u_t = \exp{(ith)}$ and $b_t= ith$ for some selfadjoint operator $h$ in 
$L^\infty(\cA_0,\tau)$.  
\end{corollary}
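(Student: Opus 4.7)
The plan is to invoke the bijective correspondence of Theorem~\ref{stone}: since the two cocycles determine each other, it suffices to run the argument from one side, and I would choose the additive side as the cleaner starting point. So assume $b$ is differentiable and set $h:=\dot b_0=\lim_{t\to 0^+} t^{-1} b_t$ in $L^2(\cA,\tau)$, noting that $b_0=0$ follows from the cocycle identity with $s=t=0$.

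The first step is to identify $h$ as an element of $L^2(\cA_0,\tau)$ and to derive $b_t=th$. Adaptedness gives $t^{-1}b_t \in L^2(\cA_{[0,t]},\tau)$, so conditional $L^2$-contractivity, combined with the upper continuity of the filtration noted in Section~\ref{preliminaries} (which yields $\bigcap_{t>0}\cA_{[0,t]}=\cA_0$), forces the limit $h$ into $L^2(\cA_0,\tau)$. Differentiating the cocycle identity $b_{s+t}=S_t(b_s)+b_t$ in $s$ at $s=0$, and using that $S_t$ acts as the identity on $L^2(\cA_0,\tau)$, gives $\dot b_t = S_t(h)=h$, hence $b_t=th$ for all $t\ge 0$.

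The second step combines the covariance hypothesis with the structure equation. Substituting $b_t=th$ into $E_0(b_1^*b_1+b_1b_1^*)\in L^\infty(\cA_0,\tau)$ yields $h^*h+hh^* \in L^\infty(\cA_0,\tau)$, and by positivity of each summand this forces $h\in L^\infty(\cA_0,\tau)$. Since the drift $t\mapsto th$ has vanishing quadratic variation $\[b,b\]=0$ (parallel to the classical fact that finite-variation processes contribute nothing to $[X,X]$, to be read off from the definition in Section~\ref{variation}), the structure equation collapses to $b_t^*+b_t=0$, i.e.\ $h^*=-h$. Writing $h=ih_0$ with $h_0=h_0^*\in L^\infty(\cA_0,\tau)$ gives $b_t=ith_0$, and the stochastic differential equation $u_t=1+\int_0^t db_s\,u_s$ then reduces to the ordinary operator ODE $\dot u_t = ih_0 u_t$ with $u_0=1$, whose unique solution is $u_t=\exp(ith_0)$.

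The main obstacle I anticipate is justifying $\[b,b\]=0$ for a pure drift within the specific non-commutative calculus of Section~\ref{variation}, together with the technical care needed to commute differentiation with the $L^2$-valued cocycle identity; both are technical rather than conceptual. The case where $u$ is assumed differentiable instead of $b$ follows by symmetry, since the stochastic differentiation formula $b_t=\lim_N \sum_j S_{jt/N}(u_{t/N}-1)$ becomes a Riemann sum for $t\cdot \dot u_0$ once $\dot u_0$ is shown (by the same adaptedness and upper continuity argument) to lie in $L^\infty(\cA_0,\tau)$, returning us to the same generator $h_0$.
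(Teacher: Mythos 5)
Your proposal is correct, but it reaches the conclusion by a genuinely different route on the additive side. The paper's own proof splits $b_t=c_t+td$ into centred part and drift and kills the centred part by a growth-rate mismatch: the white-noise identity $E_0(|c_t|^2)=E_0(|c_1|^2)\,t$ forces $\Nm{c_t}_2\sim t^{1/2}$, which is incompatible with differentiability at $t=0$ (where $c_t=O(t)$), so $c_1=0$ and hence $c\equiv 0$; selfadjointness of the drift generator then comes from the structure equation, exactly as in your last step. You instead bypass the decomposition entirely: you localize $\dot b_0$ in $\cA_0$ by the same filtration-intersection argument the paper uses (for $u'_0$, on the unitary side), integrate the differentiated cocycle identity to get $b_t=th$ directly, extract boundedness of $h$ from the covariance hypothesis, and then invoke $\[b,b\]=0$ for drifts to reduce the structure equation to $h+h^*=0$. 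Both arguments are sound; the paper's is slightly more economical in its hypotheses (it needs differentiability only at $t=0$ and exposes the probabilistic reason a differentiable cocycle cannot have a martingale part), while yours is uniform across both directions of the corollary but leans more heavily on the surrounding machinery -- the vanishing of the quadratic variation of a drift (asserted in Section 5, and easy once $h\in L^\infty$: $\sum_j|b_{t_{j+1}}-b_{t_j}|^2\le|\cP|\,t\,|h|^2\to 0$ in $L^1$), the SDE of Theorem 5.3, and, for the unitary case, the uniqueness half of the bijective correspondence. For the case where $u$ is assumed differentiable, the paper argues directly (showing $u'_0\in\cA_0$, then $u'_t=S_t(u'_0)u_t=u'_0u_t$, hence $u_t=\exp(u'_0t)$ with $u'_0=ih$ by unitarity), which is cleaner than your reduction through the stochastic-differentiation Riemann sum, though your reduction does work once $\dot u_0\in\cA_0$ is established.
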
 
\begin{proof}
We will show that a differentiable unitary cocycle $u$ lies in $\cA_0$, the fixed point algebra
of the shift $S$. Since $u_t-1 \in \cA_{(-\infty,t]} \cap \cA_{[0,\infty)}$ for any $t>0$
we conclude $\lim_{t\to 0}\frac1t(u_t-1) \in \bigcap_{t>0} \cA_{(-\infty,t]}$. 
The continuity from above of the filtration $(\cA_{(-\infty,t]})_{t \in \bbR}$ implies
$\lim_{t\to 0}\frac1t(u_t-1) \in \cA_{(-\infty,0]}$. But it holds also 
$\lim_{t\to 0}\frac1t(u_t-1) \in \cA_{[0,\infty)}$. Thus the derivative $u^\prime_0$ has to be 
in $\cA_0$. Furthermore, the cocycle identity implies
$u^\prime_t = S_t(u^\prime_0) u_t = u^\prime_0 u_t$ and therefore 
$u_t = 1 + \int_0^t u^\prime_s ds = 1 + \int_0^t u^\prime_0 u_s ds$ is in $\cA_0$ for any 
$t \ge 0$. Since the solution $u_t = \exp{(u^\prime_0 t)}$ is unitary, we conclude
$u^\prime_0 = i h$ for some selfadjoint $h \in \cA_0$. 

On the other hand,  the additive cocycle uniquely decomposes  as $b_t = c_t + ih t$, 
where $c$ is a centred additive cocycle and $h\in \cA_0$. 
From $E_0(|c_t|^2) = E_0(|c_1|^2) t$ and the differentiability 
of $c_t$ we conclude $c_1=0$ and from this $c_t=0$. The selfadjointness of $h$ follows
immediately from the structure equation. 
\end{proof}

By the means of our main results stationary non-commutative Markov processes
can easily be constructed from additive cocycles for a white noise shift.

\begin{corollary}
Let $b$ be an additive cocycle (with centred part $c$ and drift $d$) 
which satisfies the conditions stated in \textup{(a)} and let
$u$ be the corresponding unitary cocycle. Then 
$(\cA,\tau, \operatorname{Ad}u \, S;\cA_0)$ is a stationary Markov process with values
in $\cA_0$. Moreover, the generator $L$ of the contractive 
semigroup $R = E_0 (\operatorname{Ad}u) E_0$ has the form
$$ 
L (x) = E_0(c_1^*xc_1) + d^*x + x d, \quad x \in \cA_0.
$$
\end{corollary}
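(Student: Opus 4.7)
The first assertion is immediate from the Proposition preceding the main theorem: once a unitary cocycle $u$ for the white noise shift is in hand, $(\cA,\tau,\operatorname{Ad}u\,S;\cA_0)$ is automatically a stationary Markov process with values in $\cA_0$, so no additional work is required beyond invoking the bijection of Theorem \ref{stone} to produce $u$ from $b$.

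For the generator formula my plan is to differentiate $t\mapsto R_t(x) = E_0(u_t^* x u_t)$ at $t=0$ by means of the stochastic differential equation $u_t = 1+\int_0^t db_s\,u_s$ and its adjoint $u_t^* = 1+\int_0^t u_s^*\,db_s^*$. Applying the quantum It\^o product rule (Section \ref{integration}) to $u_t^* x u_t$ for $x\in\cA_0$ yields
$$
u_t^* x u_t \;=\; x + \int_0^t u_s^*\,db_s^*\,x\,u_s + \int_0^t u_s^*\,x\,db_s\,u_s + \int_0^t u_s^*\,db_s^*\,x\,db_s\,u_s ,
$$
where the last integral is the It\^o correction governed by the quadratic variation $\[b,b\]$ developed in Section \ref{variation}.

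Second, I would apply $E_0$ to both sides and exploit adaptedness and the white-noise independence to simplify each integrand. Decomposing $b_t = c_t+td$ with $c$ centred, the first two integrals contribute, at the infinitesimal level, only through the drift part $d\,dt$ (the martingale parts average out against $E_0$), producing the terms $d^* x$ and $x d$ after dividing by $t$ and letting $t\to 0$. For the quadratic integral, the $d\,dt$ contributions are of order $(dt)^2$ and vanish in the limit, so only the centred increments survive; the covariance identity $E_0(c_t^* y c_t) = t\,E_0(c_1^* y c_1)$ (stated just after the definition of additive cocycle and applied fibrewise in $s$ via stationarity of $S$ and the cocycle property of $u$, noting that $u_s\to 1$ strongly as $s\to 0$) produces the It\^o correction $E_0(c_1^* x c_1)$. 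Summing the three contributions gives
$$
L(x) \;=\; \lim_{t\to 0}\tfrac{1}{t}\bigl(R_t(x)-x\bigr) \;=\; E_0(c_1^* x c_1) + d^* x + x d ,\qquad x\in\cA_0 .
$$

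The principal technical obstacle is not the formal computation but the rigorous control of the limits and of the It\^o product expansion inside $L^2(\cA,\tau)$. This relies on the boundedness hypothesis $E_0(b^*b+bb^*)\in L^\infty(\cA_0,\tau)$, which guarantees that multiplication by $x\in\cA_0$ leaves the integrands in the appropriate $L^2$-space, and on the non-commutative martingale/It\^o-isometry estimates of Section \ref{variation} together with the stochastic integration theory of Section \ref{integration}, both of which must be invoked to legitimate dividing by $t$ and passing to the limit uniformly on bounded $x\in\cA_0$.
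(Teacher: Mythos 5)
Your proposal is correct in outline, and the first assertion is handled exactly as the paper intends: the Markov property is just the Proposition of Section \ref{whitenoise} applied to the unitary cocycle $u$ produced by Theorem \ref{stone}. For the generator formula, however, you take a genuinely different route from the one the survey sets up. You differentiate $R_t(x)=E_0(u_t^*xu_t)$ by expanding $u_t^*xu_t$ with the quantum It\^o product rule applied to the SDE $u_t=1+\int_0^t db_s\,u_s$ and its adjoint; the paper instead reaches this formula through the machinery of Section \ref{flucdiss}, where the flow $\beta_t(x)=\[b,xb\]_t+xb_t+b_t^*x$ is obtained by \emph{stochastic differentiation} of $\operatorname{Ad}u$, i.e.\ as the $L^1$-limit of the Riemann-type sums $\sum_{j}S_{jt/N}(u_{t/N}^*xu_{t/N}-x)$, and Theorem \ref{flow} then gives $L(x)=E_0(\beta_1(x))=E_0(b_1^*xb_1)-d^*xd+d^*x+xd$, which reduces to the stated formula upon writing $b_1=c_1+d$ and using $E_0(c_1)=0$ so that $E_0(b_1^*xb_1)=E_0(c_1^*xc_1)+d^*xd$. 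The trade-off is real: your route needs two-sided integrals of the form $\int u_s^*\,db_s^*\,x\,u_s$ and a product/It\^o-correction formula that the survey only records in the one-sided form $\[b,c\]_t=b_t^*c_t-\int_0^t b_s^*dc_s-\int_0^t db_s^*c_s$ (and only for $L^4$-cocycles), so the expansion you write down would have to be justified separately from the tools actually displayed here; the paper's route pays instead with the convergence of the flow sums, which is exactly what Theorem \ref{flow} supplies. Your identification of the individual limits (martingale parts killed by $E_0$ via independence, drift parts giving $d^*x$ and $xd$, the covariance identity $E_0(c_t^*yc_t)=tE_0(c_1^*yc_1)$ giving the It\^o correction $E_0(c_1^*xc_1)$) is sound and is the same mechanism that underlies the paper's flow computation, so the two derivations agree where they overlap.
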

As already stated, additive cocycles are non-commutative generalizations
of classical L\'evy processes. In our main result we assumed
that second moments of the additive cocycle exist. But the structure 
equation (in $L^1(\cA,\tau)$) is very restrictive as the following result shows:

\begin{theorem}
If an additive $L^2$- cocycle $b$ satisfies the condition \textup{(b)} in
\textup{Theorem} \textup{\ref{stone}}, 
then all moments of $b$ exist, i.e., $\tau(|b|^p) < \infty $ for any $1 \le p <\infty$. 
\end{theorem}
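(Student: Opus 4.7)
The strategy is to transport the $L^\infty$-boundedness of the unitary cocycle $u$ corresponding to $b$ (via Theorem~\ref{stone}) into $L^p$-bounds on $b_t$, by applying the non-commutative Burkholder--Gundy inequalities of Pisier--Xu and Junge--Xu to the discrete approximations arising in the stochastic differentiation formula, and passing to the limit.

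Fix $t>0$ and $2 \le p < \infty$, and set $\xi_j^{(N)} := S_{jt/N}(u_{t/N}-1)$ for $j = 0,\ldots,N-1$ together with $b_t^{(N)} := \sum_{j=0}^{N-1}\xi_j^{(N)}$. Each $\xi_j^{(N)}$ lies in $\cA_{[jt/N,(j+1)t/N]}$ with $\Nm{\xi_j^{(N)}}_\infty \le 2$, and $b_t^{(N)} \to b_t$ in $L^2(\cA,\tau)$ by the stochastic differentiation formula. It therefore suffices to bound $\Nm{b_t^{(N)}}_p$ uniformly in $N$: weak lower semicontinuity of $\Nm{\cdot}_p$ (equivalently, weak compactness of balls in $L^p(\cA,\tau)$ for $1<p<\infty$) matched against the $L^2$-convergence then forces $b_t \in L^p(\cA,\tau)$. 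The range $1 \le p \le 2$ is immediate from $L^2(\cA,\tau) \subset L^p(\cA,\tau)$ for the finite trace $\tau$.

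Split $b_t^{(N)} = M_N + \mu_N$ where $\mu_N := N E_0(u_{t/N}-1)$ and $M_N := \sum_{j=0}^{N-1}(\xi_j^{(N)} - E_0\xi_j^{(N)})$. Norm continuity of the semigroup $E_0(u)$ with bounded generator $L_0 \in L^\infty(\cA_0,\tau)$ gives $\mu_N \to tL_0$ in operator norm, so $\Nm{\mu_N}_\infty$ is uniformly bounded in $N$. The sequence $d_k := \xi_{k-1}^{(N)} - E_0\xi_{k-1}^{(N)}$ ($k=1,\ldots,N$) is a martingale difference sequence for the discrete filtration $\mathcal{F}_k := \cA_{[0,kt/N]}$: by Definition~\ref{noise}, white-noise independence collapses $E_{\mathcal{F}_{k-1}}$ to $E_0$ on elements of $\cA_{[(k-1)t/N, kt/N]}$, and the fixed-point identity $E_0 \circ S_r = E_0$ makes the conditional variance independent of $k$, namely
$$
E_{\mathcal{F}_{k-1}}(d_k^*d_k) \;=\; E_0\bigl((u_{t/N}-1)^*(u_{t/N}-1)\bigr) - E_0(u_{t/N}-1)^*\,E_0(u_{t/N}-1).
$$
The non-commutative Burkholder--Gundy inequality \cite{PiXu97a,JuXu01a} then yields
$$
\Nm{M_N}_p \;\le\; C_p \max\bigl(\Nm{s_c(M_N)}_p,\,\Nm{s_r(M_N)}_p\bigr),
$$
with $s_c(M_N)^2$ equal to $N$ times the displayed quantity and $s_r(M_N)^2$ the analogous row version. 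Expanding $E_0(u_{t/N}) = 1 + (t/N)L_0 + O((t/N)^2)$ in operator norm and using $E_0((u_{t/N}-1)^*(u_{t/N}-1)) = 2 - E_0(u_{t/N}) - E_0(u_{t/N}^*)$, one checks that $s_c(M_N)^2 \to -t(L_0 + L_0^*) \in L^\infty(\cA_0,\tau)$ in operator norm, uniformly bounded in $N$; the same argument handles $s_r$. Since $\tau$ is finite, $\Nm{s_c(M_N)}_p \le \Nm{s_c(M_N)}_\infty$ is uniformly controlled, hence so is $\Nm{M_N}_p$, and ultimately $\Nm{b_t^{(N)}}_p$.

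The main obstacle is verifying the collapse $E_{\mathcal{F}_{k-1}}(d_k^*d_k) = E_0(d_k^*d_k)$ on the quadratic product of the shifted increments: this is the point where the full white-noise independence axiom is essential, together with the fixed-point property of $\cA_0$ and the $\cA_0$-bimodule structure of $E_0$. A secondary concern is that the Burkholder--Gundy constant $C_p$ must be uniform in the length $N$ of the discrete martingale — which it is in the Pisier--Xu and Junge--Xu formulations — and that the final passage from the uniform $L^p$-bound on $b_t^{(N)}$ to $b_t \in L^p(\cA,\tau)$ identifies the weak $L^p$-cluster point with the $L^2$-limit via the continuous inclusion $L^p \subset L^2$.
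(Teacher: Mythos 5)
Your overall strategy is the one the paper itself attributes to [K\"o1] and [K\"o3] (the survey only cites the proof rather than reproducing it): pass through Theorem \ref{stone} to the corresponding unitary cocycle $u$, whose semigroup $E_0(u)$ has a bounded generator, and obtain a uniform $L^p$-bound on the Riemann sums $\sum_{j}S_{jt/N}(u_{t/N}-1)$ by non-commutative martingale inequalities; combined with the already known $L^2$-convergence this places the limit $b_t$ in $L^p(\cA,\tau)$. Your identification of the martingale difference sequence, the collapse $E_{\mathcal{F}_{k-1}}=E_0$ on the increment algebras via white-noise independence, the computation of the conditional variances, and the final weak-compactness argument are all sound and match the announced strategy.

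There is, however, one genuine slip in the central display. The estimate
$$
\Nm{M_N}_p \;\le\; C_p\,\max\bigl(\Nm{s_c(M_N)}_p,\;\Nm{s_r(M_N)}_p\bigr)
$$
with the \emph{conditioned} square functions $s_c(M_N)^2=\sum_k E_{\mathcal{F}_{k-1}}(d_k^*d_k)$ is not the Burkholder--Gundy inequality of \cite{PiXu97a}: that inequality involves the unconditioned square functions $(\sum_k d_k^*d_k)^{1/2}$. What you are actually invoking is the Burkholder/Rosenthal inequality of \cite{JuXu01a}, and there the right-hand side must also contain the diagonal term $(\sum_k\Nm{d_k}_p^p)^{1/p}$; without it the inequality is false for general non-commutative martingales. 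The gap is repairable in your setting: since $\Nm{d_k}_\infty\le 4$ and $\Nm{d_k}_2^2=\tau\bigl(E_0(d_k^*d_k)\bigr)=O(t/N)$ by norm continuity of $E_0(u)$, one gets $\sum_k\Nm{d_k}_p^p\le 4^{p-2}\sum_k\Nm{d_k}_2^2=O(t)$, so the missing term is also uniformly bounded in $N$, and your conclusion stands. Note that if you insist on the genuine Pisier--Xu inequality instead, $\Nm{(\sum_k d_k^*d_k)^{1/2}}_p$ cannot be read off from the covariance alone; one must recentre $\sum_k d_k^*d_k$ and iterate the argument in $L^{p/2}$, which is exactly the ``tedious approximations'' of the first proof in [K\"o1] that the Rosenthal-based route of [K\"o3] was designed to avoid.
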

The proof uses non-commutative martingale inequalities 
and was obtained in \cite{Koes00a} by approximations starting from the 
non-commutative Burk\-holder-Gundy inequalities in
\cite{PiXu97a}. Meanwhile an alternative proof
is given in \cite{Koes02c} which starts from the non-commutative Burkholder/Rosenthal 
inequalities in \cite{JuXu01a}.

We conjecture that the assumptions in our main result on the norm continuity of the 
semigroup $E_0(u)$ and the boundedness of the symmetric variance  $E_0(b^*b +bb^*)$ 
can be tremendiously relaxed. This is indicated by results which are used in the 
proof of the bijective correspondence between additive and unitary cocycles.

\specialsection{Stochastic integration}\label{integration} 
In this section we review non-commutative stochastic integrals and 
differential equations of It\^o type as far as they are needed in 
theorem \ref{stone}.

Let an $\cA_0$-valued white noise $(\cA, \tau, S;(\cA_I)_I)$, its canonical 
extensions to $L^p$-spaces ($1\le p <\infty$) and an additive $L^p$-cocycle for 
the shift $S$ be given. The following 
observation is the key to build It\^o integrals on non-commutative $L^2$-spaces
of white noises. 

\begin{proposition}
Let $x\in L^2(\cA_I,\tau)$ and $y \in L^2(\cA_J,\tau)$ be independent, i.e., 
$I^\circ \cap J^\circ = \emptyset$. 
Then $E_0(x^*x) \in L^\infty(\cA_0,\tau)$ implies $xy\in L^2(\cA,\tau)$. Moreover,
$$
\Nm{xy}_2^{} = \|(E_0(x^*x))^{1/2}y\|_2^{} \text{ and } E_0(y^*x^*xy) = E_0(y^*E_0(x^*x)y).
$$  
\end{proposition}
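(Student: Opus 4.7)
My plan is to first extract the analytic content of "independence", then prove the desired identities in a bounded base case where $x^{*}x$ is replaced by a bounded element $z\in\cA_I$, and finally extend to $x\in L^{2}(\cA_I,\tau)$ by spectral truncation of $x^{*}x$.

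First, I would observe that the defining condition $E_I\circ E_J=E_0$ is symmetric in $(I,J)$, so $E_J\circ E_I=E_0$ holds as well. For $a\in\cA_I$, the identity $E_I(a)=a$ then gives
\[
E_J(a)=E_J(E_I(a))=E_0(a),
\]
and by contractivity of the conditional expectations on $L^{p}$-spaces this yields $E_J|_{L^{p}(\cA_I,\tau)}=E_0|_{L^{p}(\cA_I,\tau)}$ for every $1\le p\le\infty$ (and symmetrically with $I$ and $J$ exchanged). This is the only consequence of independence I would use below.

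Second, the bounded base case: for $z\in\cA_I$ positive and $y\in L^{2}(\cA_J,\tau)$, I would show
\[
\tau(y^{*}zy)=\|E_0(z)^{1/2}y\|_{2}^{2}\quad\text{and}\quad E_0(y^{*}zy)=E_0(y^{*}E_0(z)y).
\]
Trace cyclicity rewrites $\tau(y^{*}zy)=\tau(z\,yy^{*})$; the $\cA_J$-bimodularity of $E_J$ (valid since $yy^{*}\in L^{1}(\cA_J,\tau)$ and $z\in L^{\infty}(\cA,\tau)$) together with $\tau=\tau\circ E_J$ gives $\tau(z\,yy^{*})=\tau(E_J(z)\,yy^{*})$; the first paragraph replaces $E_J(z)$ by $E_0(z)$; finally $\tau=\tau\circ E_0$ with a second bimodularity step yields the first display. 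The operator-valued identity is obtained by the same algebraic manipulations tested against arbitrary $a\in\cA_0$ (or by approximation of $y$ by bounded elements of $\cA_J$).

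Third, for general $x$ I would let $f_n\in\cA_I$ be the spectral projection of $X:=x^{*}x\in L^{1}_{+}(\cA_I,\tau)$ corresponding to $[0,n]$, and set $X_n:=Xf_n\in\cA_I\cap L^{\infty}(\cA,\tau)$. Then $X_n\nearrow X$ in $L^{1}_{+}$, and by hypothesis $E_0(X_n)\le E_0(X)\in L^{\infty}(\cA_0,\tau)$, so $E_0(X_n)$ is uniformly norm-bounded and converges to $E_0(X)$ in the strong operator topology. Applying the base case to $X_n$,
\[
\tau(y^{*}X_ny)=\|E_0(X_n)^{1/2}y\|_{2}^{2},\qquad E_0(y^{*}X_ny)=E_0(y^{*}E_0(X_n)y).
\]
The left sides increase monotonically to $\tau(y^{*}Xy)=\|xy\|_{2}^{2}$ and to $E_0(y^{*}Xy)$ respectively (monotone convergence for $\tau$ and for $E_0$ in $L^{1}(\cA_0,\tau)$). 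The right sides are uniformly controlled by $\|E_0(X)\|_{\infty}\|y\|_{2}^{2}$ and converge to $\|E_0(X)^{1/2}y\|_{2}^{2}$ and to $E_0(y^{*}E_0(X)y)$, which forces $xy\in L^{2}(\cA,\tau)$ with the asserted norm and delivers the operator-valued identity.

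The main technical point I anticipate lies in the right-hand side: $E_0(X_n)\to E_0(X)$ only in SOT with a uniform $L^{\infty}$-bound, while $E_0(yy^{*})$ sits merely in $L^{1}(\cA_0,\tau)$. The scalar and operator-valued convergences therefore rest on the standard fact that an SOT-convergent uniformly norm-bounded sequence in $L^{\infty}$ acts continuously by multiplication on $L^{2}$-vectors, together with SOT-continuity of the square root on the bounded positive part to upgrade this to $E_0(X_n)^{1/2}y\to E_0(X)^{1/2}y$ in $L^{2}$; this is the only analytic subtlety I foresee.
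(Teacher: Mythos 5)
Your argument is correct. Note that the paper, being a survey, states this proposition without printing a proof (the details are deferred to the references), so there is no in-text argument to compare against; your route is the natural one. The three ingredients you isolate are exactly what is needed: (1) the commuting-square identity $E_J\circ E_I=E_0$ (which, as you say, follows by symmetry of the quantifier in the definition of independence, or alternatively by taking adjoints of $E_I E_J=E_0$ on $L^2$) gives $E_J|_{L^p(\cA_I,\tau)}=E_0|_{L^p(\cA_I,\tau)}$; (2) trace cyclicity plus $\cA_J$-bimodularity of $E_J$ handles the bounded case; (3) spectral truncation of $x^*x$ with the uniform bound $E_0(X_n)\le E_0(X)\in L^\infty(\cA_0,\tau)$ passes to the limit. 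The only steps that deserve the extra care you already flag are the bimodularity identity $E_J(z\,yy^*)=E_J(z)\,yy^*$ for $yy^*$ merely in $L^1(\cA_J,\tau)$ (handled by approximating $y$ by bounded elements of $\cA_J$ and using $L^1$-continuity of $E_J$) and the strong convergence $E_0(X_n)^{1/2}y\to E_0(X)^{1/2}y$ in $L^2$ (handled by SOT-continuity of the square root on uniformly bounded positive sets). Both are standard, and with them the proof is complete.
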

With this result we are prepared to define stochastic integrals for additive cocycles.
Let $\{0 =s_0 < s_1 <\ldots <s_n =t\}$ be a subdivision of the interval $[0,t]$ and let
the simple stochastic process 
$y = \sum_{j=0}^{n-1} y_{s_j} \chi_{[s_{j}, s_{j+1})} \subset 
L^2(\cA, \tau)$ be adapted to the filtration $(\cA_{[0,s]})_{s\ge0}$, 
i.e., $y_{s} \in L^2(\cA_{[0,s]},\tau)$ for any $s \ge 0$. Then the (left) It\^o integral of $x$ 
w.r.t.\ the centred additive $L^2$-cocycle $b$ is defined to be
$$
\int_0^t db_s y_s = \sum_{j=0}^{n-1} (b_{s_{j+1}}-b_{s_j}) y_{s_j}.
$$ 
This leads to the following It\^o isometry resp.\ identity.
\begin{proposition} Let $y \subset L^2(\cA,\tau)$ be a simple adapted process, then
\begin{eqnarray*}
\Nm{\int_0^t db_s y_s}_2^2 &=& \int_0^t \Nm{(E_0(b_1^* b_1))^{1/2} x_s}_2^2 ds,\\
E_0( |\int_0^t db_s y_s|^2) &=& \int_0^t E_0(x^*_s E_0(b^*_1 b_1)x_s) ds.
\end{eqnarray*}
\end{proposition}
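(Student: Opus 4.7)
The plan is to expand the Itô integral as a finite sum, square it, and show that the cross terms vanish by a martingale argument while the diagonal terms yield the claimed Riemann sum for the integral on the right.

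First I would write
\[
\left|\int_0^t db_s\, y_s\right|^2 = \sum_{j,k=0}^{n-1} y_{s_k}^*\,(b_{s_{k+1}}-b_{s_k})^*(b_{s_{j+1}}-b_{s_j})\,y_{s_j}
\]
and apply $E_0$ term by term. For cross terms, say $j<k$, I would condition on $\cA_{[0,s_k]}$ and use the cocycle identity to write $b_{s_{k+1}}-b_{s_k}=S_{s_k}(b_{s_{k+1}-s_k})\in\cA_{[s_k,s_{k+1}]}$; adaptedness puts the other three factors in $\cA_{[0,s_k]}$. Since $[0,s_k]^\circ\cap [s_k,s_{k+1}]^\circ=\emptyset$, the white noise independence property (Definition \ref{noise}) gives $E_{[0,s_k]}E_{[s_k,s_{k+1}]}=E_0$, and combined with $S_{s_k}(E_0(b_{s_{k+1}-s_k}))=0$ (using that $b$ is centred and that $S_{s_k}$ fixes $\cA_0$) the conditional expectation of the increment vanishes. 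Thus all off-diagonal terms drop.

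Next, for the diagonal term at index $j$ I would apply the previous proposition to the independent pair $(b_{s_{j+1}}-b_{s_j})\in L^2(\cA_{[s_j,s_{j+1}]},\tau)$ and $y_{s_j}\in L^2(\cA_{[0,s_j]},\tau)$, giving
\[
E_0\bigl(y_{s_j}^*(b_{s_{j+1}}-b_{s_j})^*(b_{s_{j+1}}-b_{s_j})y_{s_j}\bigr)
= E_0\bigl(y_{s_j}^* E_0\bigl((b_{s_{j+1}}-b_{s_j})^*(b_{s_{j+1}}-b_{s_j})\bigr) y_{s_j}\bigr).
\]
Because $S_{s_j}$ acts trivially on $\cA_0$, the inner conditional expectation equals $E_0(b_{s_{j+1}-s_j}^*b_{s_{j+1}-s_j})$, and the stationarity identity $E_0(c_t^* x c_t)=E_0(c_1^* x c_1)\,t$ (stated in Section \ref{whitenoise}) with $x=1$ collapses this to $E_0(b_1^*b_1)(s_{j+1}-s_j)$. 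The diagonal contribution therefore becomes $E_0(y_{s_j}^*E_0(b_1^*b_1)y_{s_j})\,(s_{j+1}-s_j)$, and summing over $j$ reassembles the Riemann integral on the right, since the integrand is constant on each subinterval. Applying $\tau$ to both sides and using the tracial property yields the first $\|\cdot\|_2^2$ equality from the second.

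The only delicate step is the vanishing of the cross terms: one must justify that the two-sided conditional expectation chain $E_0(\cdots(b_{s_{k+1}}-b_{s_k})\cdots)$ is indeed zero despite the non-commutative ordering. The cleanest way is to pull the outer factors that are adapted to $\cA_{[0,s_k]}$ through $E_{[0,s_k]}$ by the module property, reducing the matter to $E_{[0,s_k]}(b_{s_{k+1}}-b_{s_k})=0$, which is the martingale/centredness argument above. Everything else is routine bookkeeping between the cocycle identity, stationarity of the shift on $\cA_0$, and the white noise independence relation.
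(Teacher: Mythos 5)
Your proof is correct and follows exactly the route the paper intends: the survey states this proposition without proof, but it is set up to follow from the immediately preceding independence proposition (which you apply to the diagonal terms), the martingale/centredness argument for the cross terms, and the stationarity identity $E_0(c_t^*xc_t)=E_0(c_1^*xc_1)\,t$ to convert the Riemann sum into the $ds$-integral. The one delicate point—justifying the module property of $E_{[0,s_k]}$ on products of $L^2$ elements, which requires the standing assumption $E_0(b_1^*b_1)\in L^\infty(\cA_0,\tau)$ to make the products integrable—is correctly identified and handled in your last paragraph.
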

The first identity allows the extension of the It\^o integrals to the Hilbert space
$\cH([0,t])$ of adapted square integrable $L^2(\cA,\tau)$-valued functions on $[0,t]$. Note that the 
assumed boundedness of the covariance $E_0(b^*_1b_1)$ implies that the It\^o integral
defines a bounded linear map on $\cH([0,t])$. The definition of the integral $\int_0^t db_s y_s$
for a non-centred additive cocycle $b$ follows easily by decomposing $b$ 
into its centred part and its drift. Consequently, one can prove the following existence
and uniqueness theorem for solutions of stochastic differential equations.
\begin{theorem}\label{sde}
Let $b$ be a (non-)centred additive $L^2$-cocycle with  covariance
$E_0(b_1^*b_1) \in L^\infty(\cA_0,\tau)$. Then the stochastic differential equation
$$
y_t = 1 + \int_{0}^t db_s y_s 
$$
has a unique solution $x$ in $\cH([0,t])$. 
\end{theorem}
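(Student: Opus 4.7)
My plan is to follow the classical Picard iteration scheme, transplanted into the non-commutative $L^2$-setting, where the It\^o isometry from the preceding proposition plays the role of the usual $L^2$-estimate.

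First I would separate the noise from the drift. Writing $b_t = c_t + td$ with $c$ centred and $d \in L^\infty(\cA_0,\tau)$ (both pieces lie in the stated spaces thanks to the identity $E_0(b_1^*b_1) = E_0(c_1^*c_1) + d^*d$), the driving integral splits as
$$\int_0^t db_s \, y_s = \int_0^t dc_s\, y_s + \int_0^t d\, y_s\, ds,$$
where the first integral is the It\^o integral built above and the second is an ordinary Bochner integral in $L^2(\cA,\tau)$. For any $y \in \cH([0,t])$ the It\^o isometry together with $E_0(c_1^*c_1)\in L^\infty(\cA_0,\tau)$ yields
$$\Bigl\|\int_0^t dc_s\, y_s\Bigr\|_2^{\,2} \le \|E_0(c_1^*c_1)\|_\infty \int_0^t \|y_s\|_2^{\,2}\, ds,$$
and Cauchy--Schwarz applied to the drift piece gives a similar bound with constant $\|d\|_\infty^{\,2}\, t$. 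Thus there is a constant $C_t$ such that $\bigl\|\int_0^t db_s y_s\bigr\|_2^{\,2} \le C_t \int_0^t \|y_s\|_2^{\,2}\, ds$, and the right-hand side is again adapted by construction.

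Next I would define the Picard iterates $y^{(0)}_s \equiv 1$ and $y^{(n+1)}_t = 1 + \int_0^t db_s\, y^{(n)}_s$. Adaptedness propagates through each step because the It\^o integral of an adapted simple process lies in $L^2(\cA_{[0,t]},\tau)$ and the extension to $\cH([0,t])$ respects adaptedness. Applying the displayed inequality to the differences $\Delta^{(n)}_t := y^{(n+1)}_t - y^{(n)}_t = \int_0^t db_s\, \Delta^{(n-1)}_s$ and iterating produces the familiar bound
$$\|\Delta^{(n)}_s\|_2^{\,2} \le \frac{(C_t s)^{n}}{n!} \cdot M$$
for $0\le s \le t$, where $M$ absorbs $\|\Delta^{(0)}\|_{\cH([0,t])}^{\,2}$. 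Summing in $n$ shows that $(y^{(n)})$ is Cauchy in $\cH([0,t])$; its limit $y$ is adapted and, by continuity of the It\^o integral on $\cH([0,t])$, satisfies the equation.

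For uniqueness I would take two solutions $y,\tilde y \in \cH([0,t])$ and apply the same estimate to $z := y-\tilde y$, obtaining $\|z_s\|_2^{\,2} \le \frac{(C_t s)^n}{n!} \|z\|_{\cH([0,t])}^{\,2}$ for every $n$, hence $z\equiv 0$. The main technical point, and the place where I expect the bookkeeping to be delicate, is to check that the iteration actually remains inside $\cH([0,t])$ at each step, i.e.\ that simple adapted approximations converging to $y^{(n)}$ also yield adapted $L^2$-limits of the It\^o integrals; this is where the It\^o isometry, the boundedness of $E_0(c_1^*c_1)$, and lower continuity of the filtration combine to guarantee that the construction closes up. Everything beyond this is standard Picard--Gronwall machinery carried over verbatim.
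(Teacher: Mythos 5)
Your proposal is correct and follows essentially the same route as the paper: the paper proves Theorem \ref{sde} as ``a typical application of Banach's fixed point theorem'' on $\cH([0,t])$, using only the boundedness of the covariance operator (via the It\^o isometry) and the Lipschitz continuity of the integral map, after decomposing $b$ into its centred part and drift. Your explicit Picard iteration with the $(C_t s)^n/n!$ bounds is just the constructive form of that same contraction argument, so there is nothing substantively different to flag.
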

The proof is a typical application of Banach's fixed point theorem
and uses only the boundedness of the covariance operator and the Lipschitz-continuity 
of the identity map on $\cH([0,t])$. Obviously, the theorem extends to a larger class of
Lipschitz continuous maps than the identity. 
We note that the definition of (right) integrals $\int_{0}^t y_s db_s$ is along the 
same lines as for the (left) integral and leads to similar results if one requires 
$E_0(b_1 b^*_1) \in L^\infty(\cA_0,\tau)$. 
We remark that for additive $L^p$-cocycles with covariance
in $L^q(\cA_0, \tau)$, where $q<\infty$, the Banach fixed point theorem no longer holds in 
$\cH([0,t])$. 

Finally, the following result establishes the construction of a unitary cocycle from an additive 
one.
\begin{theorem}
The solution of the stochastic differential equation in \textup{Theorem \ref{sde}} is a unitary cocycle if
the additive $L^2$-cocycle $b$ satisfies the structure equation $\[b,b\] + b + b^* =0$. 
\end{theorem}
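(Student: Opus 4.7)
The plan is to check, in order, adaptedness, unitarity, and the cocycle identity for the solution $u$ of the SDE in Theorem~\ref{sde}. Adaptedness is immediate from the Picard iteration that constructs $u$: the initial guess $u^{(0)}\equiv 1$ is adapted, every It\^o integral preserves adaptedness, and convergence in $\cH([0,t])$ then places $u_t$ in $L^2(\cA_{[0,t]},\tau)$.

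For unitarity, the key tool is the quantum It\^o product rule applied to $u_t^* u_t$. Substituting $du_t=db_t u_t$ (and hence $du_t^*=u_t^* db_t^*$), one computes
$$
d(u_t^* u_t)=(du_t)^* u_t+u_t^*(du_t)+(du_t)^*(du_t)=u_t^*\bigl(db_t^*+db_t+db_t^* db_t\bigr)u_t.
$$
The first two terms integrate to $b_t^*+b_t$ and the third to $\[b,b\]_t$ (cf.\ Section~\ref{variation}), so the structure equation $\[b,b\]+b+b^*=0$ forces $u_t^* u_t$ to be constant, hence equal to $1$. In particular $u_t\in L^\infty(\cA,\tau)$ with $\Nm{u_t}_\infty\le 1$. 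For the coisometry relation $u_t u_t^*=1$, the plan is to use the right-integral counterpart of Theorem~\ref{sde}: the symmetric covariance hypothesis $E_0(b^*b+bb^*)\in L^\infty(\cA_0,\tau)$ ensures $E_0(b_1 b_1^*)\in L^\infty(\cA_0,\tau)$, giving existence and uniqueness for the right SDE $\tilde u_t=1+\int_0^t \tilde u_s\, db_s$. Applying the product rule to $u_t u_t^*$ and using the right-handed form $db_t\, db_t^*+db_t+db_t^*=0$ of the structure equation---again a consequence of the symmetric covariance via the It\^o table of Section~\ref{variation}---shows that $f_t:=u_t u_t^*-1$ satisfies a homogeneous linear SDE with zero initial datum, hence $f_t\equiv 0$.

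It remains to establish the cocycle identity $u_{s+t}=S_t(u_s)u_t$. Fix $s\ge 0$ and set $v_t:=u_{s+t}$ and $w_t:=S_s(u_t)u_s$. The additive cocycle identity, written in the form $b_{s+t}=b_s+S_s(b_t)$, yields the increment relation $db_{s+\rho}=S_s(db_\rho)$, so the SDE for $u$ gives
$$
v_t=u_s+\int_s^{s+t} db_r\, u_r=u_s+\int_0^t S_s(db_\rho)\, v_\rho.
$$
On the other hand, since $S_s$ is an automorphism that commutes with the It\^o integral,
$$
w_t=S_s\Bigl(1+\int_0^t db_\rho\, u_\rho\Bigr)u_s=u_s+\int_0^t S_s(db_\rho)\, w_\rho.
$$
Both $v$ and $w$ solve the same SDE with the same initial datum $u_s$, driven by the shifted additive cocycle $S_s(b)$, whose covariance $E_0\bigl(S_s(b_1)^* S_s(b_1)\bigr)=E_0(b_1^* b_1)$ remains $\cA_0$-bounded (using that $\cA_0$ is the fixed-point algebra of $S$ and that $E_0\circ S_s=E_0$ by uniqueness of the trace-preserving conditional expectation). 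Uniqueness in Theorem~\ref{sde} forces $v=w$, which after swapping the roles of $s$ and $t$ is the claimed cocycle identity.

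The main obstacle in this program is the coisometry step: the relation $u_t^* u_t=1$ falls directly out of the given structure equation, but $u_t u_t^*=1$ requires the right-handed version $db_t\, db_t^* + db_t+db_t^*=0$, whose derivation is precisely where the symmetric covariance hypothesis and the quadratic-variation machinery of Section~\ref{variation} are needed; the other two items (adaptedness and the cocycle identity) reduce to routine applications of uniqueness in Theorem~\ref{sde} and stationarity of the increments.
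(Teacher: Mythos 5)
The paper itself states this theorem without proof (the argument is deferred to \cite{Koes00a}), so your proposal can only be judged on its own terms. Your overall architecture --- adaptedness from the Picard iteration, isometry from the It\^o product rule combined with the structure equation, and the cocycle identity from uniqueness in Theorem \ref{sde} applied to the time-shifted equation --- is the natural and essentially correct route; in particular the cocycle-identity step is sound (both $E_0\circ S_s=E_0$ and $db_{s+\rho}=S_s(db_\rho)$ are correctly derived, and the uniqueness argument transfers to the shifted filtration with only routine modifications).

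There is, however, a genuine gap exactly where you locate your ``main obstacle'': the coisometry. You propose to derive a right-handed structure equation $db\,db^*+db+db^*=0$, i.e.\ $\[b^*,b^*\]+b+b^*=0$, from the symmetric covariance hypothesis ``via the It\^o table''. That implication is nowhere established and is not obviously true: hypothesis (b) gives only $\[b,b\]+b+b^*=0$, and taking adjoints returns the same equation (since $\[b,b\]$ is a positive, hence selfadjoint, cocycle) rather than producing one involving $\[b^*,b^*\]$; the boundedness of $E_0(b^*b+bb^*)$ by itself does not yield $\[b^*,b^*\]=\[b,b\]$. As written, this step would not go through. Fortunately it is also unnecessary: $\cA$ is a \emph{finite} von Neumann algebra with faithful normal trace $\tau$, so once $u_t^*u_t=1$ you know $u_t$ is an isometry in $\cA$ with $u_tu_t^*\le 1$, and $\tau(1-u_tu_t^*)=\tau(1)-\tau(u_t^*u_t)=0$ together with faithfulness forces $u_tu_t^*=1$. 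This one line replaces the entire right-SDE detour and is the reason the theorem can be stated with only the one-sided structure equation. A secondary, smaller issue: the computation $d(u^*u)=u^*(db^*+db+db^*db)u$ needs justification here, since $u$ is a priori only an element of $\cH([0,t])$ and $b$ only an $L^2$-cocycle, whereas the integration-by-parts identity of Section \ref{variation} is stated for $L^4$-cocycles; one must pass the telescoping sum for $u_t^*u_t-1$ to the limit in $L^1$ using the $L^2$-It\^o identity $E_0(|\int_0^t db_s\,y_s|^2)=\int_0^t E_0(y_s^*E_0(b_1^*b_1)y_s)\,ds$ and the boundedness of the covariance, rather than invoking a generic ``quantum It\^o product rule''.
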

  
\specialsection{Quadratic variations}\label{variation}
\noindent
Quadratic variations of L\'evy processes are fundamental for the development
of stochastic calculi in Classical Probability. They give rise to the
famous It\^o corrections. This section is devoted to
quadratic variations of non-commutative L\'evy processes as they appear in the white
noise approach. 

Let $\cP= \{0=t_0 < t_1 < \ldots < t_{n_\cP}=t\}$ denote a subdivision of the interval
$[0,t]$ with mesh $|\cP|= \max_j{\{t_{j+1}-t_j}\}$. 

 \begin{theorem}
Let $b$ be an additive adapted cocycle for $S$ in $L^p(\cA,\tau)$ where $2\le p < \infty$.
Then its quadratic variation
$$
\[b,b\]_t = L^1\text{-}\lim_{|\cP| \to 0} \sum_{j=0}^{n_{\cP}}
|b_{t_{j+1}}-b_{t_j}|^2 
$$
exists for any $t>0$ and defines the (non-centred) additive cocycle $\[b,b\]$ in $L^{p/2}(\cA,\tau)$. 
\end{theorem}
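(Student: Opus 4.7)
The plan is to handle the drift separately, then reduce the convergence of $Q_\cP := \sum_j |b_{t_{j+1}} - b_{t_j}|^2$ for the centred part to the convergence of a discrete stochastic integral via a non-commutative integration-by-parts identity. Writing $b_t = c_t + t d$ with $d \in L^p(\cA_0,\tau)$ and $c$ centred, I would expand $|\Delta b_j|^2$ into the centred square $|\Delta c_j|^2$, cross terms with one factor of $\Delta c_j$ and one factor $d\,(t_{j+1}-t_j)$, and a drift-squared term $|d|^2(t_{j+1}-t_j)^2$. Using the stationarity identity $\|\Delta c_j\|_2^2 = (t_{j+1}-t_j)\,\tau(E_0(|c_1|^2))$ together with $\sum_j (t_{j+1}-t_j)^2 \le |\cP|\,t$, both the cross and drift-squared contributions vanish in $L^1$ as $|\cP| \to 0$, so only the centred quadratic sum matters for the limit.

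For the centred cocycle $c$, expanding $c_t = \sum_k \Delta c_k$ gives the key identity
$$|c_t|^2 \;=\; \sum_k |\Delta c_k|^2 \;+\; 2\,\mathrm{Re}\sum_k c_{t_k}^*\, \Delta c_k ,$$
which rewrites $Q_\cP^{(c)} = |c_t|^2 - 2\,\mathrm{Re}\, S_\cP$ with $S_\cP := \sum_k c_{t_k}^*\, \Delta c_k$ a discrete right stochastic integral. I would then show that $S_\cP \to \int_0^t c_s^*\, dc_s$ in $L^1$ as $|\cP|\to 0$, using that $s \mapsto c_s^*$ is a strongly continuous adapted $L^p$-valued function so that its step-function approximants converge in the Hilbert space of adapted square-integrable $L^2$-processes of Section~\ref{integration}. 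The It\^o isometry then identifies the limit and yields
$$[c,c]_t \;=\; |c_t|^2 \;-\; 2\,\mathrm{Re}\int_0^t c_s^*\, dc_s \qquad \text{in } L^1(\cA,\tau) .$$

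The remaining claims are routine. Adaptedness of $[b,b]_t$ to $\cA_{[0,t]}$ is inherited from the partial sums together with the $L^1$-continuity of $E_{[0,t]}$. The cocycle identity $[b,b]_{u+s} = [b,b]_u + S_u([b,b]_s)$ follows by restricting to partitions of $[0,s+u]$ containing the point $u$ and invoking $b_{u+r}-b_u = S_u(b_r)$ on the right half, so that the right-hand sum collapses to $S_u$ applied to a quadratic sum for $c$ over $[0,s]$. Finally, membership $[b,b]_t \in L^{p/2}$ follows from a uniform a~priori estimate $\|Q_\cP\|_{p/2} \lesssim \|c_t\|_p^2$ provided by the non-commutative Burkholder--Gundy inequality of Pisier--Xu and Junge--Xu cited in Section~\ref{variation}, combined with lower semicontinuity of the $L^{p/2}$-norm under $L^1$-convergence.

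The principal obstacle will be the $L^1$-convergence of $S_\cP$ over the full range $p \ge 2$: since $E_0(c_1 c_1^*)$ is not assumed $L^\infty$-bounded, the It\^o isometry of Section~\ref{integration} is not directly applicable to the integrand $c_s^*$. I expect that this is overcome either by an a~priori $L^{p/2}$-bound on $S_\cP$ coming from the non-commutative martingale inequalities, or by truncating $c_s$ with spectral projections $\chi_{[0,n]}(|c_s|)$, establishing convergence at each truncation level by the isometry, and then passing to the limit through a dominated convergence argument in $L^1$.
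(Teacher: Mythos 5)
The paper itself contains no proof of this theorem --- it is quoted from \cite{Koes02b} --- so a line-by-line comparison is impossible; I can only judge your plan on its own terms and against the hints the survey gives. Your reduction to the centred part is correct and complete: orthogonality of the martingale increments in $L^2$ does kill the cross terms and the drift-squared term. The algebraic identity $|c_t|^2=\sum_k|\Delta c_k|^2+S_\cP+S_\cP^*$ is right, the cocycle identity and adaptedness arguments are routine as you say, and the uniform $L^{p/2}$-bound on the discrete sums via the Pisier--Xu inequality applied to the \emph{discrete} martingale $(c_{t_j})_j$ legitimately places the limit, once it exists, in $L^{p/2}$.

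The genuine gap is exactly where you locate it, but it is more serious than your closing paragraph concedes, and on part of the stated range your strategy targets an object that does not exist within the framework of the paper. The It\^o integral of Section \ref{integration} is defined only for integrands in the Hilbert space $\cH([0,t])$ of adapted square-integrable $L^2$-valued processes and for integrators whose covariance lies in $L^\infty(\cA_0,\tau)$; neither hypothesis is available here. For $2\le p<4$ the products $c_{t_k}^*\Delta c_k$ lie only in $L^{p/2}\subseteq L^1$ and the target integral $\int_0^t c_s^*\,dc_s$ is not defined at all --- which is presumably why the survey states the integration-by-parts identity $\[b,c\]_t=b_t^*c_t-\int_0^tb_s^*\,dc_s-\int_0^t db_s^*\,c_s$ only for $L^4$-cocycles. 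Your two proposed repairs do not close this: an a priori $L^{p/2}$-bound on $S_\cP$ gives boundedness, not convergence, and truncating $c_s$ by spectral projections fixes the integrand but leaves the integrator with unbounded covariance and destroys the isometry you would need to pass to the limit. Note also that the naive $L^1$ triangle-inequality estimate of $Q_\cP-Q_{\cP'}$ over a refinement diverges, so genuine cancellation must be exploited: for $p\ge4$ orthogonality of the martingale differences in $L^2$ does this and your scheme can be made to work, but for $p\in[2,4)$ an additional idea is required (for instance, positivity of $Q_\cP$ together with the constancy of $\tau(Q_\cP)=t\,\tau(|c_1|^2)$ and convergence in measure, in the spirit of Scheff\'e's lemma). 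As written, the proposal establishes the theorem at best for $p\ge4$, not for the full range $2\le p<\infty$.
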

If $b$ and $c$ are two additive cocycles in $L^p(\cA,\tau)$ ($2\le p <\infty$) then
the mutual quadratic variation $\[b,c\]$ is defined by polarisation. The quadratic
variation extends easily to a non-centred additive $L^2$-cocycle $b$ and it holds
$\[b,b\] = \[c,c\]$, where $c$ denotes the centred part of $b$. As
usual, the quadratic
variation of a drift as well as its mutal quadratic 
 variation with an additive cocycle vanishes.
In addition, the identity
$$
\[b,c\]_t = b^*_tc_t - \int_0^tb^*_s dc_s - \int_0^t db^*_s c_s, \qquad t\ge 0,
$$
holds for additive $L^4$-cocycles $b$ and $c$. 

Centred additive cocycles are $L^p$-continuous martingales w.r.t. the filtration 
$(\cA_{(-\infty, t]})_{t\ge 0}$. Moreover, by the above result, their quadratic
variation exists. Starting from the non-commutative Burkholder-Gundy
inequalities of Pisier and Xu \cite{PiXu97a} for martingales indexed
by discrete time, we established \cite{Koes02b}: 
 
\begin{theorem}\label{bg} 
Let $2 \le p <\infty$. Then there exists constants $\alpha_p$ and
$\beta_p$ only depending on $p$ such that for any centred additive
$L^p$-cocycle $b$ and any $t \ge 0$
$$
\alpha_p \Nm{b_t}_{\cH_p(\cA)}\le \Nm{b_t}_p \le \beta_p \Nm{b_t}_{\cH_p(\cA)}
$$
where $\Nm{b_t}_{\cH_p(\cA)} = \max\{\|\[ b,b\]_t^{1/2}\|_p, 
                                  \|\[ b^*,b^*\]_t^{1/2}\|_p\}$.
\end{theorem}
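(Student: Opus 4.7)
The plan is to reduce the statement to the discrete-time non-commutative Burkholder--Gundy inequalities of Pisier and Xu by sampling the cocycle along partitions of $[0,t]$ and then passing to the limit using the quadratic variation theorem stated just above.

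Fix $t>0$ and a partition $\cP=\{0=t_0<t_1<\cdots<t_{n_\cP}=t\}$. Because $b$ is a centred additive $L^p$-cocycle adapted to $(\cA_{(-\infty,s]})_{s\ge 0}$, the finite sequence $(b_{t_j})_{j=0}^{n_\cP}$ is a discrete $L^p$-martingale with respect to the subfiltration $(\cA_{(-\infty,t_j]})_j$, with martingale differences $d_j=b_{t_{j+1}}-b_{t_j}$. The discrete Pisier--Xu inequalities, valid for $2\le p<\infty$ with constants depending only on $p$ (and in particular independent of the length of the martingale), would yield
\begin{equation*}
\alpha_p\,\max\{s_c(\cP),s_r(\cP)\}\le\Nm{b_t}_p\le\beta_p\,\max\{s_c(\cP),s_r(\cP)\},
\end{equation*}
where $s_c(\cP)=\bigl\|\bigl(\sum_j|d_j|^2\bigr)^{1/2}\bigr\|_p$ and $s_r(\cP)=\bigl\|\bigl(\sum_j|d_j^*|^2\bigr)^{1/2}\bigr\|_p$.

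To pass to the continuum, I would use the identity $\|A^{1/2}\|_p=\|A\|_{p/2}^{1/2}$ for positive $A$ to rewrite $s_c(\cP)=\bigl\|\sum_j|d_j|^2\bigr\|_{p/2}^{1/2}$. The quadratic variation theorem asserts $\sum_j|d_j|^2\to\[b,b\]_t$ in $L^1(\cA,\tau)$ as $|\cP|\to 0$; upgrading this convergence to $L^{p/2}(\cA,\tau)$ (which is possible for $p\ge 2$) would give $s_c(\cP)\to\|\[b,b\]_t^{1/2}\|_p$, and likewise $s_r(\cP)\to\|\[b^*,b^*\]_t^{1/2}\|_p$ by applying the same argument to the cocycle $b^*$. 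Inserting these limits in the discrete inequality, and using that $\alpha_p,\beta_p$ do not depend on $\cP$, yields the claim.

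The main obstacle is precisely the upgrade from $L^1$- to $L^{p/2}$-convergence of the quadratic variation sums. The natural route is to establish a uniform $L^{p/2}$-bound on $\sum_j|d_j|^2$ across all partitions---essentially a non-commutative Doob/Rosenthal-type estimate, which can be extracted from a second application of Pisier--Xu combined with the stationarity of $S$ and the cocycle identity---and then to combine this uniform integrability with the given $L^1$-convergence. An alternative would be to first verify the inequality for bounded $b$, where uniform estimates are immediate, and then extend to $L^p$ by truncating spectrally inside $\cA_{[0,t]}$ and invoking stationarity. Either route requires some care because quadratic variation is a nonlinear operation, so any approximation must be set up so as to respect both $\cA_{[0,t]}$-adaptedness and the cocycle identity for $b$.
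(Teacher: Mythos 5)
Your proposal follows essentially the same route the paper indicates for this theorem: discretize $b$ along partitions of $[0,t]$, apply the discrete non-commutative Burkholder--Gundy inequalities of Pisier and Xu to the resulting martingale (with constants independent of the partition), and pass to the limit using the $L^1$-convergence of the quadratic sums to $\[b,b\]_t$, the main technical point being the upgrade of that convergence to the $L^{p/2}$-level --- which you correctly identify and for which you sketch a reasonable uniform-integrability argument. The paper itself only cites the strategy and defers the details to \cite{Koes02b}, so your outline matches its stated approach.
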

Note that $\alpha_p$ and $\beta_p$ are the constants which appear in the
Burkholder-Gundy inequalities for discretely indexed martingales 
as presented in \cite{PiXu97a} or \cite{JuXu01a}.

Explicit bounds of growth for centred additive cocycles are found in  
\cite{Koes02b} by an application of non-commutative Burkholder/Rosen\-thal inequalities 
\cite{JuXu01a}. 

\begin{theorem}
Let $2 \le p < \infty$. There exist positive constants $\tilde{\alpha}_p$ and
$\tilde{\beta}_p$ such that for every centred L\'evy $L^p$-process $b$ and
all $t \ge 0$ 
$$
    \tilde{\alpha}_p\max\{\mu_p^- t^{1/p}, \lambda_p t^{1/2} \} 
\le \Nm{b_t}_p
\le \tilde{\beta}_p\max\{\mu_p^+ t^{1/p}, \lambda_p t^{1/2} \},
$$
where the constants are defined to be
$$\lambda_p = \max\{ \|(E_0(b_1^*b_1))^{1/2}\|_p,
                   \|(E_0(b_1b_1^*))^{1/2}\|_p\}$$ and 
$$
\mu_p^- = \liminf_{s \searrow 0} \frac{1}{s^{1/p}}\Nm{b_s}_p, \quad    
\mu_p^+ = \limsup_{s \searrow 0} \frac{1}{s^{1/p}}\Nm{b_s}_p < \infty.
$$
\end{theorem}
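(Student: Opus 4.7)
The plan is to apply the non-commutative Burkholder--Rosenthal inequality of Junge--Xu \cite{JuXu01a} to the stationary decomposition of $b_t$ into $N$ independent increments. Iterating the cocycle identity gives, for every $N \ge 1$,
$$
b_t = \sum_{j=0}^{N-1} S_{jt/N}(b_{t/N}) = \sum_{j=0}^{N-1} d_j,
$$
and $(d_j)$ is a martingale difference sequence for the filtration $\bigl(\cA_{(-\infty,(j+1)t/N]}\bigr)_j$: adaptedness gives $d_j \in \cA_{[jt/N,(j+1)t/N]}$, while $b$ being centred and the white-noise independence property $E_{(-\infty,jt/N]} E_{[jt/N,\infty)} = E_0$ force $E_{(-\infty,jt/N]}(d_j)=0$.

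Next I would identify the three quantities entering the Junge--Xu bound. The diagonal $p$-term equals
$$
\Bigl(\sum_{j=0}^{N-1}\Nm{d_j}_p^p\Bigr)^{1/p} = N^{1/p}\Nm{b_{t/N}}_p = t^{1/p}\cdot\frac{\Nm{b_{t/N}}_p}{(t/N)^{1/p}}.
$$
For the column-conditioned square, disjoint-interior independence gives $E_{(-\infty,jt/N]}(|d_j|^2) = E_0(|d_j|^2)$, and since $S_t$ acts as the identity on the fixed-point algebra $\cA_0$ this equals $E_0(|b_{t/N}|^2)$; combined with the linearity $E_0(c_s^*c_s) = s\,E_0(c_1^*c_1)$ recalled in Section~\ref{whitenoise}, one has $\sum_j E_{(-\infty,jt/N]}(|d_j|^2) = t\,E_0(b_1^*b_1)$, so the column term equals $t^{1/2}\Nm{(E_0(b_1^*b_1))^{1/2}}_p$. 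The row term is treated identically, and together they contribute $\lambda_p t^{1/2}$ to the maximum.

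Substituting into the Junge--Xu inequality one obtains, for every $N \ge 1$,
$$
\tilde{\alpha}_p\max\Bigl\{t^{1/p}\frac{\Nm{b_{t/N}}_p}{(t/N)^{1/p}},\,\lambda_p t^{1/2}\Bigr\}
\le \Nm{b_t}_p \le
\tilde{\beta}_p\max\Bigl\{t^{1/p}\frac{\Nm{b_{t/N}}_p}{(t/N)^{1/p}},\,\lambda_p t^{1/2}\Bigr\}.
$$
Since $\Nm{b_t}_p$ does not depend on $N$ while $t/N \searrow 0$, passing to $\liminf_{N\to\infty}$ in the left-hand inequality and $\limsup_{N\to\infty}$ in the right-hand one produces the claimed estimates with $\mu_p^-$ and $\mu_p^+$ replacing the fraction. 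Finiteness of $\mu_p^+$ is secured separately, either by feeding the upper half of the above inequality back into itself at small $s$, or via Theorem~\ref{bg} combined with the additive $L^{p/2}$-cocycle property of $\[b,b\]$ recalled earlier in this section.

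The main obstacle is the precise identification of the conditioned squares: one has to match the martingale filtration $\bigl(\cA_{(-\infty,(j+1)t/N]}\bigr)_j$ used in the Junge--Xu inequality with the white-noise independence $E_IE_J=E_0$ for $I^\circ\cap J^\circ=\emptyset$, and simultaneously exploit $S_t|_{\cA_0}=\mathrm{id}$ together with the exact $L^2$-linearity $E_0(c_s^*c_s)=sE_0(c_1^*c_1)$. Once these three ingredients are aligned, the argument reduces to a clean limit in $N$ inside the two-sided Rosenthal-type estimate.
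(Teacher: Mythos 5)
Your argument is essentially the proof the paper points to: the survey gives no details, but it attributes the theorem to ``an application of non-commutative Burkholder/Rosenthal inequalities'' of Junge--Xu, which is precisely your decomposition $b_t=\sum_{j}S_{jt/N}(b_{t/N})$ into stationary martingale differences for the filtration $(\cA_{(-\infty,(j+1)t/N]})_j$, the identification of the diagonal and conditioned square-function terms via $E_IE_J=E_0$ and $E_0(c_s^*c_s)=sE_0(c_1^*c_1)$, and the passage $N\to\infty$. The one small imprecision is at the end: the finiteness of $\mu_p^+$ follows from the \emph{lower} (left-hand) Rosenthal estimate, which already gives $\Nm{b_{t/N}}_p/(t/N)^{1/p}\le \Nm{b_t}_p/(\tilde{\alpha}_p t^{1/p})$ uniformly in $N$ (and hence for all small $s$ after a routine remainder argument), rather than from ``feeding the upper half back into itself''.
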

The constants $\lambda_p$ describe norms of the covariance operator of the 
additive cocycle. If the $p$-th absolute moment
$M_p(t):=\tau(|b_t|^p)$ is right differentiable at $t=0$
then $\mu_p^- = \mu_p^+ = (M_p^\prime(0))^{1/p}$. In the case $p=2,4$
this is a matter of fact, nevertheless we strongly conjecture 
that this property holds for any $2 \le p < \infty$.

\specialsection{Stochastic differentiation}\label{differentiation}
\noindent
This section presents results which are related to the construction
of additive cocycles from unitary cocycles for the shift $S$ of
a given white noise. We call this procedure {\em stochastic differentiation}
because it reduces to the usual differentiation as soon as we require the 
unitary cocycle to be differentiable. 

In the following we always assume that an $\cA_0$-valued white noise
$(\cA,\tau,S;(\cA_I)_I)$ and its extensions to (non-)commutative $L^p$-spaces
($1\le p < \infty)$ are given. We first present the result which is relevant
for Theorem \ref{stone}, i.e, the construction of additive cocycles with
a covariance operator in $L^\infty(\cA_0,\tau)$. 
 
\begin{theorem}
Let $u$ be a unitary cocycle for $S$ such that the semigroup $E_0(u)$ has
the generator $d\in \cA_0$. Let $1 \le p < \infty$. Then 
$$
b_t = L^p\text{-}\lim_{N\to \infty} \sum_{j=0}^{N-1}S_{jt/N}(u_{t/N}-1) 
$$
exists for any $t>0$ and defines an additive cocycle $b$ for $S$ 
in $\bigcap_{p < \infty} L^p(\cA,\tau)$. 
\end{theorem}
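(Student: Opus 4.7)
The approach is to split each partial sum into a drift term in $\cA_0$ and a centred noncommutative-martingale term, bound both uniformly in $N$ using the white noise independence structure, and then pass to the limit.

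Write $u_{t/N}-1 = A_N + \delta_N$ with $A_N := u_{t/N} - E_0(u_{t/N})$ centred and $\delta_N := E_0(u_{t/N})-1 \in \cA_0$. Since $S$ fixes $\cA_0$ pointwise,
\begin{equation*}
b_t^{(N)} := \sum_{j=0}^{N-1}S_{jt/N}(u_{t/N}-1) = c_t^{(N)} + N\delta_N,\qquad c_t^{(N)} := \sum_{j=0}^{N-1}S_{jt/N}(A_N).
\end{equation*}
Norm continuity of $E_0(u)$ gives $E_0(u_{t/N}) = e^{(t/N)d}$, so $N\delta_N \to td$ in operator norm, hence in every $L^p$. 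For the centred part, the increments $X_j^{(N)} := S_{jt/N}(A_N) \in \cA_{[jt/N,(j+1)t/N]}$ satisfy $E_{[0,jt/N]}(X_j^{(N)}) = E_0(X_j^{(N)}) = 0$ by the white noise property combined with $E_0 \circ S = E_0$ and $E_0(A_N)=0$. Thus $c_t^{(N)}$ is a discrete noncommutative martingale whose increments are independent over $\cA_0$.

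A direct computation using unitarity yields $E_0(|A_N|^2) = 1 - |E_0(u_{t/N})|^2 = -(t/N)(d+d^*) + O(N^{-2})$ in operator norm, giving a conditional quadratic variation
\begin{equation*}
\sum_{j=0}^{N-1} E_{[0,jt/N]}(|X_j^{(N)}|^2) = N E_0(|A_N|^2) = -t(d+d^*) + O(N^{-1})
\end{equation*}
that is uniformly bounded in $L^\infty(\cA_0,\tau)$; the adjoint version with $|X_j^{(N)*}|^2$ is analogous. The noncommutative Burkholder--Gundy inequality for discrete martingales \cite{PiXu97a,JuXu01a} then delivers $\sup_N \|c_t^{(N)}\|_p < \infty$ for every $2\le p<\infty$.

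The main work is proving $L^2$-Cauchyness. For a refinement $K = kN$, the cocycle identity writes $u_{t/N} = u^{(k-1)} \cdots u^{(0)}$ with $u^{(l)} := S_{lt/K}(u_{t/K})$. Setting $\mu := E_0(u_{t/K})$ and $\tilde\alpha^{(l)} := u^{(l)} - \mu$, one obtains by independence over $\cA_0$
\begin{equation*}
\|c_t^{(K)} - c_t^{(N)}\|_2^2 = N \|B^{(N,k)}\|_2^2, \qquad B^{(N,k)} := \sum_{l=0}^{k-1}\tilde\alpha^{(l)} - A_N.
\end{equation*}
Expanding $\prod_l u^{(l)}$ in powers of the $\tilde\alpha^{(l)}$, the drifts $k\mu - \mu^k$ cancel against $-E_0(A_N)$; the first-order terms differ from $\sum_l \tilde\alpha^{(l)}$ only by insertions of $\mu^a - 1 = O(t/N)$, contributing $O((t/N)^3)$ to $\|B^{(N,k)}\|_2^2$. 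The $m$-th order contribution ($m\ge 2$) is a sum of $\binom{k}{m}$ ordered monomials in distinct $\tilde\alpha^{(l)}$'s; the iterated $\cA_0$-bimodule estimate $\|\tilde\alpha^{(l_m)}\cdots \tilde\alpha^{(l_1)}\|_2^2 \le \prod_i \|E_0(|\tilde\alpha^{(l_i)}|^2)\|_\infty = O((t/K)^m)$ combined with orthogonality across distinct index sets (by centring) gives total contribution $\binom{k}{m}(t/K)^m = O((t/N)^m/m!)$, summing to $\|B^{(N,k)}\|_2^2 = O((t/N)^2)$ uniformly in $k$. Hence $\|c_t^{(K)} - c_t^{(N)}\|_2^2 = O(t^2/N)$ uniformly in $k$, and arbitrary $N, M$ are handled by passing to a common refinement. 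Extracting this combinatorial estimate -- beating the pairing count $\binom{k}{m}$ using the double centring of distinct $\tilde\alpha^{(l)}$'s -- is the main technical obstacle.

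Combining $L^2$-Cauchyness with the uniform $L^p$ bound gives, by interpolation, $L^p$-convergence of $c_t^{(N)}$ for every $1\le p<\infty$ to a limit $c_t$, and $b_t := c_t + td$ lies in $\bigcap_{p<\infty} L^p(\cA,\tau)$ with adaptedness $b_t \in L^p(\cA_{[0,t]},\tau)$ inherited from the approximants. The additive cocycle identity $b_{s+t} = S_t(b_s) + b_t$ is then obtained by evaluating the limit along common refinements of the grids on $[0,s]$, $[0,t]$ and $[0,s+t]$, noting that the Cauchy estimate above is insensitive to the choice of refining partition sequence.
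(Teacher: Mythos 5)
Your strategy is the same one the paper outlines for this theorem (and defers to \cite{Koes00a,Koes02c} for in detail): split off the drift $N(E_0(u_{t/N})-1)\to td$, treat the centred part as a non-commutative martingale with increments independent over $\cA_0$, prove $L^2$-convergence by a refinement/expansion argument, obtain uniform $L^p$-bounds from non-commutative martingale inequalities, and interpolate. So the architecture is correct and matches the paper; two steps in your sketch are, however, asserted rather than established.

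First, the quantity you actually bound is the \emph{conditioned} bracket $\sum_j E_{[0,jt/N]}(|X_j^{(N)}|^2)=NE_0(|A_N|^2)$. This is the input to the Burkholder/Rosenthal inequality of \cite{JuXu01a}, not to the Burkholder--Gundy inequality of \cite{PiXu97a}: for $p\ge 2$ the conditioned square function is dominated by the unconditioned one $\bigl(\sum_j|X_j^{(N)}|^2\bigr)^{1/2}$, so controlling the former does not control the latter. If you use Rosenthal, you must also bound the diagonal term $\bigl(\sum_j\Nm{X_j^{(N)}}_p^p\bigr)^{1/p}$; this does work, since $\Nm{A_N}_p^p\le\Nm{A_N}_\infty^{p-2}\Nm{A_N}_2^2=O(t/N)$ gives $O(t^{1/p})$ uniformly in $N$, but it is a missing ingredient in your write-up (and is precisely the point where the paper's remark about ``tedious approximations'' versus the simpler Junge--Xu route becomes relevant). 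Second, the ``orthogonality across distinct index sets'' of the monomials $\mu^{a_m}\tilde\alpha^{(l_m)}\cdots\tilde\alpha^{(l_1)}\mu^{a_0}$ is not automatic from the commuting-square notion of independence, because $M^*M'$ is an up--down rather than an ordered product. It is true, but the proof needs the trace: cyclically permute so that the largest index of the symmetric difference of the two index sets sits at an extremal position, factor it off using $E_IE_J=E_0$ together with $E_0(\tilde\alpha^{(l)})=0$, and iterate. You flag this as the main technical obstacle, correctly, but without this peeling argument the crucial bound $\binom{k}{m}(t/K)^m\le (t/N)^m/m!$ (as opposed to the useless $\binom{k}{m}^2(t/K)^m$ from the triangle inequality) is not justified. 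With these two points supplied, the proof is complete and coincides with the one the paper refers to.
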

A first, direct proof of this result is contained in \cite{Koes00a}
and motivated further investigations on quadratic variations of
additive cocycles. The proof is quite technical and its strategy 
is as follows. In a first step one establishes the convergence
of the ansatz in the norm topology of $L^2(\cA,\tau)$. Next one 
considers the case $2<p<\infty$. Since the ansatz is already
convergent in $L^2(\cA,\tau)$, it is sufficient to find
a uniform bound of the approximating sequence in $L^p(\cA,\tau)$. 
This already guarantees that the limit is in $L^p(\cA,\tau)$ and 
that the ansatz is convergent in the norm topology on $L^q(\cA,\tau)$ 
for $q<p$. Thus one is left with the problem to find a uniform bound 
of the approximating sequence in $L^p(\cA,\tau)$. In \cite{Koes00a} 
this bound was established by tedious approximations which
are based on the non-commutative Burkholder-Gundy inequalities of
Pisier and Xu \cite{PiXu97a}. This concrete example anticipated in
parts more general work of Junge and Xu on non-commutative 
Burkholder-Rosenthal inequalities \cite{JuXu01a}. Meanwhile, a simpler 
proof of the theorem is available in \cite{Koes02c} which is based on 
the results in \cite{JuXu01a}.  

In the following we investigate the case that the semigroup $E_0(u)$
of the unitary cocycle is no longer continuous in the uniform norm 
topology on $L^\infty(\cA_0,\tau)$. This situation leads to the 
construction of additive cocycles with unbounded covariance
operators. Before stating the results let us apply to our case some general 
theory of strongly continuous contractive semigroups on Banach spaces
\cite{Davi80a}.

From $L^\infty(\cA_0,\tau) \subset \cB(L^p(\cA_0,\tau))$ for any 
$1 \le p \le \infty$ we conclude that the generator of $E_0(u)$ is 
a $\tau$-measurable operator $d \in L^0(\cA_0,\tau)$ which acts by 
left-multiplication on the domain $\cD^{p}(d) \subset
L^p(\cA_0,\tau)$. Note that $\cD^{(p)}(d) \supset \cD^{(q)}(d) \supset \cD^{(\infty)}(d)$ 
for $1\le p <q \le \infty$ and, in addition, that
$\cD^{\infty}(d)$ is dense in $L^\infty(\cA_0,\tau)$. Consequently $\cD^{\infty}(d)$ 
is dense in $L^2(\cA_0,\tau)$.  
Let us remind that $E_0(u)$ is uniformly continuous 
on $L^p(\cA_0,\tau)$ iff $d \in L^\infty(\cA_0,\tau)$. Finally, recall
that the vector $a \in L^p(\cA_0,\tau)$ is said to be an analytic vector 
for the generator $d$ of the semigroup $E_0(u)$ on $L^p(\cA,\tau)$ if 
$\sum_{n=0}^\infty \frac{1}{n!}\Nm{d^na}_p <\infty$.

The following result is already contained in \cite{Koes00a}:
\begin{theorem}
Let $u$ be a unitary cocycle for $S$ such that 
$1 \in L^\infty(\cA_0,\tau)$ 
is an analytic vector for the generator $d$ of the semigroup $E_0(u)$
acting by left-multiplication on $L^p(\cA_0,\tau)$. Then  
$$
b_t = L^p\text{-}\lim_{N\to \infty} \sum_{j=0}^{N-1}S_{jt/N}(u_{t/N}-1) 
$$
exists for any $1\le p <\infty$
and defines an additive cocycle $b$ for $S$ in 
$\bigcap_{p < \infty} L^p(\cA,\tau)$. 
\end{theorem}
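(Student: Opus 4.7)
My plan is to run the same decomposition that underlies the bounded-generator case and to show that the analyticity of $1$ for $d$ is precisely what is needed to compensate for $d$ no longer being a bounded operator. Write each Riemann-sum increment as
$$S_{jt/N}(u_{t/N}-1) = S_{jt/N}\bigl(u_{t/N}-E_0(u_{t/N})\bigr) + S_{jt/N}\bigl(E_0(u_{t/N})-1\bigr).$$
Since $E_0(u_{t/N})\in\cA_0$ is fixed by $S$, the second summand is independent of $j$, so the drift part of the sum collapses to $N\bigl(E_0(u_{t/N})-1\bigr)$. The centered part is the sum $\sum_j X_j$ with $X_j := S_{jt/N}\bigl(u_{t/N}-E_0(u_{t/N})\bigr)\in\cA_{[jt/N,(j+1)t/N]}$, which, by the white-noise property and adaptedness, forms a martingale difference sequence with respect to $(\cA_{(-\infty,jt/N]})_j$ and is uniformly bounded in $L^\infty$ by $2$.

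Next I would dispatch the drift approximation. Because $1\in L^\infty(\cA_0,\tau)$ is analytic for $d$, the Taylor expansion $E_0(u_s) = \sum_{n\ge0}\frac{s^n d^n}{n!}$ converges in $L^p(\cA_0,\tau)$ for all small $s$ and all $p<\infty$. Consequently $N\bigl(E_0(u_{t/N})-1\bigr) = td + O(1/N)$ in every $L^p$-norm, so the drift part converges to $td$ in $L^p(\cA_0,\tau)$.

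For the centered part I would apply the non-commutative Burkholder/Rosenthal inequality of Junge--Xu to the martingale difference array $(X_j)$, applied to differences between Riemann sums indexed by two subdivisions $N$ and $M$ (and to their common refinement) to establish a Cauchy property in $L^p(\cA,\tau)$. The two ingredients needed are uniform control on the column/row quadratic variations and on $\bigl(\sum_j\|X_j\|_p^p\bigr)^{1/p}$. Since $u_{t/N}$ is unitary, a direct computation gives $\sum_j E_{jt/N}(|X_j|^2) = N\bigl(1 - |E_0(u_{t/N})|^2\bigr)$, and the expansion $|E_0(u_s)|^2 = 1 + s(d+d^*) + o(s)$ in $L^p$-norm (again from analyticity of $1$) shows this quadratic variation is uniformly bounded in $L^{p/2}(\cA_0,\tau)$; the row variation is estimated analogously using $E_0(uu^*)=1$. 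The Rosenthal term is similarly controlled because $\|u_{t/N}-E_0(u_{t/N})\|_p \to 0$ at a rate dictated by the Taylor expansion, so $\sum_j \|X_j\|_p^p$ stays bounded. Telescoping these estimates against a refinement of the two subdivisions yields Cauchy convergence of the centered sums in each $L^p$.

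Combining the two pieces gives the $L^p$-limit $b_t$ for every $p<\infty$. Adaptedness passes to the limit because $L^p(\cA_{[0,t]},\tau)$ is closed in $L^p(\cA,\tau)$, and the cocycle identity $b_{s+t} = S_t(b_s)+b_t$ follows from taking compatible subdivisions of $[0,s]$ and $[s,s+t]$ and sending the mesh to zero. The main obstacle I expect is precisely the unboundedness of $d$: without spectral truncation (which would destroy the cocycle structure), uniform $L^p$-bounds on the centered martingale sums have to be extracted by a Taylor-expansion argument that exploits analyticity of $1$ in every step, rather than by the cleaner $L^\infty$-estimates available in the bounded-generator case.
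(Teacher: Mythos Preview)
Your decomposition into drift and centred parts, and your treatment of the drift via the Taylor expansion of $E_0(u_s)$ afforded by analyticity of $1$, are correct and in line with the paper. The divergence from the paper's argument, and the gap in yours, both occur in the centred part.

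The paper does not prove this theorem in the text (it refers to \cite{Koes00a}), but the strategy it spells out for the bounded-generator case---which is the template---differs from yours in an essential way. There one first establishes convergence of the centred sums in $L^2(\cA,\tau)$: this is the easy step, because the increments $X_j$ are orthogonal and $\sum_j\|X_j\|_2^2 = N\bigl(1-\tau(|E_0(u_{t/N})|^2)\bigr)$ is controlled by the expansion of the semigroup. Only \emph{after} $L^2$-convergence is secured does one invoke the Burkholder--Gundy (later Burkholder--Rosenthal) inequalities, and then solely to produce a \emph{uniform $L^p$-bound} on the approximating sequence. An $L^2$-convergent sequence that is bounded in $L^p$ converges in $L^q$ for every $2\le q<p$; since $p$ is arbitrary, this yields convergence in every $L^q$.

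Your route attempts to extract $L^p$-Cauchy convergence directly from Burkholder--Rosenthal applied to the \emph{difference} $Y_N-Y_M$. The problem is that the estimates you give---boundedness of $N(1-|E_0(u_{t/N})|^2)$ in $L^{p/2}$ and of $\sum_j\|X_j\|_p^p$---are uniform \emph{bounds}, not \emph{smallness} statements for the difference. To make the telescoping work you would have to express $Y_N-Y_M$ as a martingale on the common refinement, compute its column/row conditional square functions, and show those tend to zero; this requires comparing $u_{t/N}$ with products of shifted copies of $u_{t/M}$ via the cocycle identity and controlling the resulting cross-terms, which is a substantial calculation you have not indicated. The paper's two-stage approach sidesteps this entirely by confining the convergence question to $L^2$, where orthogonality replaces the martingale inequalities.

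One smaller imprecision: the rate of $\|u_s-E_0(u_s)\|_p$ is not ``dictated by the Taylor expansion''---that expansion is for $E_0(u_s)$, not for $u_s$. The correct estimate is $\|u_s-E_0(u_s)\|_2^2 = 1-\tau(|E_0(u_s)|^2)=O(s)$, interpolated against the trivial $L^\infty$-bound $2$, giving $\|u_s-E_0(u_s)\|_p=O(s^{1/p})$; your conclusion that $\sum_j\|X_j\|_p^p$ is bounded is then correct.
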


Recently we could establish the most general case of stochastic differentiation: 
\begin{theorem}\label{dgeneral}
Let $u$ be a unitary cocycle for the shift $S$ and let 
$\eta\in \cD^{(\infty)}(d)\subset L^2(\cA_0,\tau)$.
Then  
$$
B_t(\eta) = L^2\text{-}\lim_{N\to \infty} \sum_{j=0}^{N-1}S_{jt/N}(u_{t/N}-1)\eta 
$$
exists for any $t>0$ and defines an additive $L^2$-cocycle $B(\eta)$ 
for any $\eta \in \cD^{(\infty)}(d)$.
%Moreover, $(B_t)_{t\ge 0}$ is a family of $\tau$-measureable operators
%in $L^0(\cA,\tau)$ which acts by left-multiplication on its domain.  
\end{theorem}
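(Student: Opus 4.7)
The plan is to prove $L^2$-Cauchyness of the Riemann sums $R_N(t) := \sum_{j=0}^{N-1} S_{jt/N}(u_{t/N} - 1)\eta$ and identify the limit $B_t(\eta)$ as an additive adapted $L^2$-cocycle. Decompose $u_s - 1 = \tilde u_s + (A_s - 1)$, with $A_s := E_0(u_s)$ the contraction semigroup on $L^p(\cA_0,\tau)$ whose $\tau$-measurable generator $d$ is affiliated with $\cA_0$ and acts by left multiplication, and $\tilde u_s := u_s - A_s$ the centred part. Because $\cA_0$ is the fixed-point algebra of $S$, the shift commutes with $E_0$, so the drift contribution telescopes:
$$
\sum_{j=0}^{N-1} S_{jt/N}(A_{t/N} - 1)\eta = N(A_{t/N}-1)\eta \;\longrightarrow\; t\,d\eta \quad \text{in } L^2,
$$
using $\eta \in \cD^{(\infty)}(d) \subset \cD^{(2)}(d)$ and the generator property, together with the contractive bound $\|(1-A_s)\eta\|_2 \le s\,\|d\eta\|_2$.

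The centred piece $Y_N(t) := \sum_j S_{jt/N}(\tilde u_{t/N})\eta$ is the heart of the matter. Each summand sits in $\cA_{[jt/N,(j+1)t/N]}$, and white-noise independence together with $E_0(\tilde u_{t/N}) = 0$ yields $E_{(-\infty, jt/N]}\bigl[S_{jt/N}(\tilde u_{t/N})\eta\bigr] = 0$, so the summands are martingale differences for the filtration $(\cA_{(-\infty, jt/N]})_j$. Orthogonality in $L^2$ then gives the closed form
$$
\|Y_N(t)\|_2^2 \;=\; N\,\tau\bigl(\eta^*(1 - A_{t/N}^* A_{t/N})\eta\bigr) \;=\; N\bigl(\|\eta\|_2^2 - \|A_{t/N}\eta\|_2^2\bigr),
$$
which converges to $-2t\,\mathrm{Re}\,\tau(\eta^* d\eta) \ge 0$ using only the $L^2$-expansion $A_s\eta = \eta + s\,d\eta + o(s)$ available from $\eta \in \cD^{(2)}(d)$.

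To upgrade this to $L^2$-Cauchyness I compare $Y_{2N}$ and $Y_N$ by dyadic refinement. Writing $\tilde\alpha := \tilde u_{t/(2N)}$, $\tilde\beta := S_{t/(2N)}(\tilde\alpha)$, $a := A_{t/(2N)}$, the cocycle identity $u_{t/N} = S_{t/(2N)}(u_{t/(2N)})\,u_{t/(2N)}$ expands $\tilde u_{t/N} = \tilde\beta\tilde\alpha + \tilde\beta(a-1) + (a-1)\tilde\alpha + (\tilde\alpha + \tilde\beta)$, while regrouping the fine partition into consecutive pairs identifies $Y_{2N} = \sum_j S_{jt/N}(\tilde\alpha + \tilde\beta)\eta$. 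Hence
$$
Y_N - Y_{2N} = \sum_{j=0}^{N-1} S_{jt/N}\bigl[\tilde\beta\tilde\alpha + \tilde\beta(a-1) + (a-1)\tilde\alpha\bigr]\eta,
$$
and white-noise independence makes the three summand families pairwise $L^2$-orthogonal (in each mixed inner product a factor $E_{[0,t/(2N)]}(\tilde\beta) = 0$ appears). Each block reduces to a single expectation times $N$, which one evaluates via the key identity $E_0(\tilde\alpha^* X \tilde\alpha) = P_s(X) - A_s^* X A_s$, where $P_s(X) := E_0(u_s^* X u_s)$ is the completely positive Markov semigroup on $\cA_0$. Rewriting the resulting scalar $\tau(\eta^* \cdot \eta)$ as a combination of $L^2$-norms $\|A_s^k\eta\|_2^2$ and $\|A_s u_s\eta\|_2^2$ and Taylor-expanding each to order $s$ reveals that the leading and linear-in-$s$ contributions cancel (a non-commutative analogue of the It\^o correction, reflecting that $P_s$ and $A_s^* \cdot A_s$ are first-order deformations of the identity with the same linear coefficient on appropriate vectors). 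Each block is therefore $N \cdot O(s^2) = O(1/N)$, so $\|Y_N - Y_{2N}\|_2^2 = O(1/N)$, summable along $N = 2^n$.

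The principal obstacle is the unboundedness of $d$: $A_s$ is only strongly continuous on $L^\infty(\cA_0,\tau)$, precluding uniform operator-norm bounds on $1 - A_s$, so every estimate must be vectorial, leaning on $\|(1 - A_s)\eta\|_2 \le s\,\|d\eta\|_2$, the contractivity $\|A_s\|_{\cA_0} \le 1$, and the second-order cancellation above. This is precisely why $\eta \in \cD^{(\infty)}(d)$ is essential: it provides $\eta \in \cA_0$ (so $E_0$-module manipulations are clean) together with $d\eta$ bounded, and both are needed in the Taylor expansions. Once $L^2$-convergence is established, adaptedness of $B_t(\eta)$ to $\cA_{[0,t]}$ is immediate from the construction; additivity $B_{s+t}(\eta) = S_t(B_s(\eta)) + B_t(\eta)$ is proved first for rational $s/t$ via a mixed-mesh Riemann sum with common step $t/N_1 = s/N_2$, and then extended to all $s, t \ge 0$ by the $L^2$-continuity of $t \mapsto B_t(\eta)$, itself inherited from the martingale property with respect to $(\cA_{(-\infty, t]})_t$.
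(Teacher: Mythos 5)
The paper itself contains no proof of this theorem (it is a survey; the statement is attributed to work in preparation), so your argument can only be judged on its own terms, and there it has a genuine gap at its central estimate. Your setup is sound: the drift part telescopes to $td\eta$, the centred summands are martingale differences, the identity $\Nm{Y_N(t)}_2^2=N(\Nm{\eta}_2^2-\Nm{A_{t/N}\eta}_2^2)$ is correct, and the dyadic decomposition $Y_N-Y_{2N}=\sum_j S_{jt/N}\bigl[\tilde\beta\tilde\alpha+\tilde\beta(a-1)+(a-1)\tilde\alpha\bigr]\eta$ with mutually orthogonal blocks is also correct. The problem is the claim that each block is $N\cdot O(s^2)$. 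Take the first block: orthogonality and white-noise independence give exactly $N\Nm{(1-a^*a)^{1/2}\tilde\alpha\eta}_2^2$ with $s=t/(2N)$, and the tools you list --- $\Nm{(1-A_s)\eta}_2\le s\Nm{d\eta}_2$, contractivity of $A_s$, and $\eta,d\eta\in\cA_0$ --- yield only $\Nm{(1-a^*a)^{1/2}\tilde\alpha\eta}_2^2\le\Nm{\tilde\alpha\eta}_2^2=\Nm{\eta}_2^2-\Nm{A_s\eta}_2^2=O(s)$, hence the block is $O(t)$: bounded, but not small. To improve this to $o(1)$ one must show $\tau\bigl(\eta^*[P_s(X)-A_s^*XA_s]\eta\bigr)=o(s)$ for the $s$-dependent element $X=1-A_s^*A_s$, where $P_s=E_0(u_s^*\cdot u_s)$. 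Your ``Taylor-expanding each to order $s$'' presupposes a first-order expansion of $P_s$ on such elements, i.e.\ control of the generator $L$ of $P_s$ on a family of elements not known to lie (uniformly) in its domain --- exactly the domain question that the unbounded case makes nontrivial and that the paper explicitly flags as unclear a priori. For bounded $d$ the expansion is legitimate in operator norm and your argument closes; for $\eta\in\cD^{(\infty)}(d)$ with $d$ unbounded, this cancellation is asserted rather than proved, and it is the whole difficulty of the theorem.

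A secondary, fixable point: summability of $\Nm{Y_{2^n}-Y_{2^{n+1}}}_2$ gives convergence only along the dyadic subsequence; to obtain the stated limit over all $N$ you need the analogous block estimate comparing $Y_N$ with an arbitrary refinement $Y_{NM}$ uniformly in $M$ (the algebra is identical, with $M-1$ cross-blocks per interval, but it must be carried out). The adaptedness, cocycle-identity and continuity arguments at the end are fine once convergence is in hand.
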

From this result we conjecture that the conditions in our presented main 
result can be relaxed such that it will include Stone's theorem for 
strongly continuous unitary groups on $L^2(\cA_0,\tau)$ whenever their generator
is a $\tau$-measureable 
operator. 

\specialsection{Structure Equations}\label{flucdiss}
\noindent
This section will shed some light on the background of the structure
equation $\[b,b\]+ b+b^*=0$ which appears in Theorem \ref{stone}.
As always in this survey, we assume the presence of an
$\cA_0$-valued white noise $(\cA,\tau, S,(\cA_I)_I)$.
Let us first focus on the situation that occurs in our extension of
Stone's Theorem.
\begin{theorem}\label{structure}
Let $u$ be a unitary cocycle for $S$ such that its associated 
semigroup $E_0(u)$ has the
generator $d \in L^\infty(\cA_0,\tau)$. Then the stochastic derivative $b$ of the
unitary cocycle $u$ is an additive cocycle in $L^2(\cA,\tau)$ which
enjoys the structure equation
$$\[b,b\]+b+b^* =0.$$
\end{theorem}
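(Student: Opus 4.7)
My plan is to derive a pre-limit identity directly from the unitarity $u_h^*u_h=1$, pass to the limit, and identify the remaining term with the quadratic variation. The algebraic identity $(u_h-1)+(u_h-1)^*+|u_h-1|^2=0$, applied with $h=t/N$, shifted by $S_{jt/N}$, and summed over $j=0,\dots,N-1$, yields
$$
b_t^{(N)}+(b_t^{(N)})^* + R_t^{(N)} = 0, \qquad R_t^{(N)} := \sum_{j=0}^{N-1}S_{jt/N}(|u_{t/N}-1|^2),
$$
where $b_t^{(N)}$ is the stochastic-differentiation approximant. The stochastic differentiation theorem (with $d\in L^\infty(\cA_0,\tau)$) gives $b_t^{(N)}\to b_t$ in $L^2(\cA,\tau)$, so $R_t^{(N)}\to -(b_t+b_t^*)$ in $L^1$, and it suffices to identify this limit with $\[b,b\]_t$. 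To this end I would compare $R_t^{(N)}$ with $Q_t^{(N)}:=\sum_j S_{jt/N}(|b_{t/N}|^2)$, which converges to $\[b,b\]_t$ in $L^1$ by definition of the quadratic variation (using $b_{t_{j+1}}-b_{t_j}=S_{t_j}(b_{t/N})$ from the cocycle identity).

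The key technical input will be the estimate
$$
\Nm{(u_h-1)-b_h}_2 = O(h) \quad \text{as } h\to 0. \qquad (\star)
$$
Granted $(\star)$, the polarization $|a|^2-|c|^2=c^*(a-c)+(a-c)^*c+|a-c|^2$ with $a=u_{t/N}-1$ and $c=b_{t/N}$, Cauchy--Schwarz, shift-invariance of $\Nm{\cdot}_1$, and $\Nm{b_{t/N}}_2=O(\sqrt{t/N})$ give
$$
\Nm{R_t^{(N)}-Q_t^{(N)}}_1 \le N\bigl(2\Nm{b_{t/N}}_2\Nm{\delta_N}_2+\Nm{\delta_N}_2^2\bigr) = O(1/\sqrt{N})\to 0,
$$
with $\delta_N:=(u_{t/N}-1)-b_{t/N}$. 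Thus $R_t^{(N)}\to\[b,b\]_t$ in $L^1$, and the structure equation follows by passing to the limit in the pre-limit identity.

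The main obstacle is to establish $(\star)$. My approach is a dyadic refinement. Set $b_h^{(M)}:=\sum_{k=0}^{M-1}S_{kh/M}(u_{h/M}-1)$, so $b_h^{(1)}=u_h-1$ and $b_h^{(M)}\to b_h$ in $L^2$. Expanding $u_{h/M}=S_{h/(2M)}(u_{h/(2M)})u_{h/(2M)}$ via the cocycle identity produces the exact telescoping
$$
b_h^{(M)}-b_h^{(2M)} = \sum_{k=0}^{M-1} w_{2k+1}w_{2k}, \qquad w_l:=S_{lh/(2M)}(u_{h/(2M)}-1).
$$
For distinct $k$ the products $w_{2k+1}w_{2k}$ live in disjoint intervals $\cA_{[kh/M,(k+1)h/M]}$, so the white-noise independence together with the factorization $E_0(xy)=E_0(x)E_0(y)$ on independent pairs reduces the $L^2$ estimate to the bounds $\Nm{E_0(|w_l|^2)}_\infty=\Nm{2-e^{(h/(2M))d}-e^{(h/(2M))d^*}}_\infty=O(h/M)$ and $\Nm{E_0(w_l)}_\infty=O(h/M)$ (both using $d\in L^\infty(\cA_0,\tau)$). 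These yield diagonal contributions of order $O(h^2/M)$ and negligible off-diagonal terms of order $O(h^4/M^2)$, hence $\Nm{b_h^{(M)}-b_h^{(2M)}}_2=O(h/\sqrt{M})$. Summing the geometric series over the dyadic sequence $M=1,2,4,\dots$ then yields $(\star)$. Without the sharpened bound $(\star)$, the naive estimate $\Nm{\delta_N}_2=O(\sqrt{h})$ would fail to control $N$ terms in $L^1$, which is why this estimate is the crux of the argument.
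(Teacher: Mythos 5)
The paper itself contains no proof of Theorem \ref{structure}: being a survey, it states the result and defers the argument to [K\"o1], [K\"o2], so there is no in-paper proof to compare against line by line. Judged on its own, your argument is correct and complete modulo the two background results you invoke, namely the stochastic differentiation theorem of Section \ref{differentiation} (with $p=2$, giving existence of $b$ and $L^2$-convergence of $b_t^{(N)}$) and the $L^1$-existence of the quadratic variation from Section \ref{variation}; both are supplied by the paper. Your pre-limit identity $b_t^{(N)}+(b_t^{(N)})^*+R_t^{(N)}=0$ is precisely the $x=1$ case of the approximation of the flow $\beta_t(x)$ in Theorem \ref{flow} (where $u^*_{t/N}xu_{t/N}-x$ reduces to $u^*_{t/N}u_{t/N}-1=0$), so your route is exactly the fluctuation--dissipation mechanism the paper describes, made quantitative. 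You correctly identify the crux: the naive bound $\Nm{u_h-1-b_h}_2=O(\sqrt h)$ coming from $\tau(|u_h-1|^2)=\tau(2-e^{hd}-e^{hd^*})=O(h)$ cannot control $N$ summands in $L^1$, and the sharpened rate $(\star)$ is what makes the comparison of $R^{(N)}$ with $Q^{(N)}$ work. Your dyadic telescoping $b_h^{(M)}-b_h^{(2M)}=\sum_k w_{2k+1}w_{2k}$ is an exact consequence of the cocycle identity, the summands lie in subalgebras over intervals with pairwise disjoint interiors, and the paper's independence structure ($E_IE_J=E_0$, whence $E_0(xy)=E_0(x)E_0(y)$ and $E_0(y^*x^*xy)=E_0(y^*E_0(x^*x)y)$ for independent $x,y$) yields the claimed $O(h^2/M)$ diagonal and $O(h^4/M^2)$ off-diagonal contributions with constants depending only on $\Nm{d}_\infty$; summing the dyadic series gives $(\star)$, and the rest is H\"older. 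I see no gap; the only point worth making explicit in a write-up is the uniformity in $M$ of the constants in $\Nm{2-e^{sd}-e^{sd^*}}_\infty=O(s)$, which is immediate from $d\in L^\infty(\cA_0,\tau)$.
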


Investigations on Theorem \ref{structure} turned out that the
structure equation comes from a non-commutative
fluctuation-dissipation theorem for additive cocycles. 
Let us define for a given additive
$L^p$-cocycle $b$ the {\em flow of additive cocycles}
\begin{eqnarray*}
\beta\colon \cA_0 \times \bbR_0^+ \to L^2(\cA,\tau) \text{ by }
  \beta_t(x) = \[b,xb\]_t + xb_t + b^*_t x.
\end{eqnarray*}
Now the structure equation can be interpreted as follows.
The fluctuation part $ \[b,xb\]_t$ is balanced by the 
dissipation part $xb_t + b^*_tx$ for $x=1$.   
This interpretation is supported by the following result. We remind
that $d$ is the generator of the semigroup $E_0(u)$ and 
$L$ denotes the generator of the semigroup $E_0 (\operatorname{Ad}u)E_0$.

\begin{theorem}\label{flow}
Let $u$ be a unitary cocycle for $S$ and $b$ its stochastic
derivative as stated in \textup{Theorem \ref{structure}}. Then it holds
for any $x\in \cA_0$ and $t\ge 0$
\begin{eqnarray*}
\beta_t(x) &=& L^1\text{-}\lim_{N\to \infty}\sum_{j=0}^{N-1}
S_{jt/N}(u^*_{t/N}x u_{t/N}-x)\quad \text{ and }\\  
 L(x) &=& E_0(\beta_1(x)) = E_0(b_1^* x b_1) -d^* x d +
d^*x + x d.
\end{eqnarray*}
\end{theorem}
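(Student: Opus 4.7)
The plan is to establish the two displayed identities in turn, exploiting the stochastic differential equation $u_h - 1 = \int_0^h db_s\, u_s$ from Theorem~\ref{sde}, the stochastic differentiation of Section~\ref{differentiation}, and the decomposition $b_t = c_t + td$ into centred part and drift (with $d \in L^\infty(\cA_0,\tau)$ the generator of $E_0(u)$).

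For the first equality, I exploit unitarity to write
$$u_{t/N}^* x u_{t/N} - x = (u_{t/N}-1)^* x (u_{t/N}-1) + (u_{t/N}-1)^* x + x (u_{t/N}-1),$$
apply $S_{jt/N}$ (which fixes $x\in\cA_0$) and sum over $j$. The two linear-in-$(u_{t/N}-1)$ sums converge in $L^2$ to $b_t^* x$ and $x b_t$ respectively, directly by stochastic differentiation. For the quadratic sum I compare with $\sum_j S_{jt/N}(b_{t/N}^* x b_{t/N}) = \sum_j (b_{(j+1)t/N} - b_{jt/N})^* x (b_{(j+1)t/N} - b_{jt/N})$, which uses the cocycle identity $b_{(j+1)t/N} - b_{jt/N} = S_{jt/N}(b_{t/N})$ and the $S$-invariance of $x$, and which converges in $L^1$ to $\[b, xb\]_t$ by polarisation of the quadratic variation theorem of Section~\ref{variation}. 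The crucial estimate is $\|b_h - (u_h - 1)\|_2 = O(h)$, obtained from the identity $u_h - 1 - b_h = \int_0^h db_s(u_s - 1)$ and the It\^o isometry, combined with the a priori bound $\|u_s - 1\|_2 = O(s^{1/2})$ (itself a further application of the It\^o isometry to $u_s - 1 = \int_0^s db_r\, u_r$, using $\|u_r\|_\infty = 1$ and $E_0(b_1^* b_1) \in L^\infty(\cA_0,\tau)$). Setting $\delta_N := b_{t/N} - (u_{t/N}-1)$ and expanding $(u_{t/N}-1)^* x (u_{t/N}-1) - b_{t/N}^* x b_{t/N} = -b_{t/N}^* x \delta_N - \delta_N^* x b_{t/N} + \delta_N^* x \delta_N$, H\"older together with the $L^1$-contractivity of $S_{jt/N}$ yields an overall error of order $N \cdot \|b_{t/N}\|_2 \|\delta_N\|_2 = O(t^{3/2} N^{-1/2}) \to 0$, so both sums share the same $L^1$-limit $\[b, xb\]_t$.

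For the second equality, I apply $E_0$ to the first identity evaluated at $t=1$. Since $\cA_0$ is the fixed-point algebra of $S$ and $\tau$ is $S$-invariant, one has $E_0 \circ S_r = E_0$ for every $r\in\bbR$, so the sum collapses to $N\bigl(E_0(u_{1/N}^* x u_{1/N}) - x\bigr) = N\bigl(R_{1/N}(x) - x\bigr)$, whose limit is $L(x)$ by definition of the generator. To identify $E_0(\beta_1(x))$ with the closed form, decompose $b_1 = c_1 + d$: then $E_0(b_1) = d$ gives $E_0(xb_1) = xd$ and $E_0(b_1^* x) = d^* x$ immediately, while the stationarity identity $E_0(c_s^* x c_s) = s E_0(c_1^* x c_1)$ from the preliminaries, combined with $E_0(c_s) = 0$, produces
$$N\cdot E_0(b_{1/N}^* x b_{1/N}) = E_0(c_1^* x c_1) + N^{-1} d^* x d \longrightarrow E_0(c_1^* x c_1),$$
hence $E_0(\[b,xb\]_1) = E_0(c_1^* x c_1)$; the expansion $E_0(b_1^* x b_1) = E_0(c_1^* x c_1) + d^* x d$ finally rewrites this as $E_0(b_1^* x b_1) - d^* x d$, yielding the stated formula.

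The main obstacle is justifying the $L^1$-convergence of the quadratic sum $\sum_j S_{jt/N}((u_{t/N}-1)^* x (u_{t/N}-1))$ to $\[b, xb\]_t$: it is a quadratic substitution of an $L^2$-approximation and requires the sharp rate $O(h)$ rather than the naive $o(h^{1/2})$ that strong continuity alone provides. The essential input making this rate available is the $L^\infty$-boundedness of the covariance $E_0(b_1^* b_1)$, which is precisely the hypothesis of Theorem~\ref{structure} (guaranteed by $d \in L^\infty(\cA_0,\tau)$).
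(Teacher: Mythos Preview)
The paper is a survey and does not include a proof of this theorem; the result is attributed to the author's works \cite{Koes00a,Koes02b,Koes02c}. Your argument is along the natural lines one would expect and is essentially correct.

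Two points worth making explicit. First, the rate $\|(u_h-1)-b_h\|_2=O(h)$ on which your quadratic estimate hinges is extracted from the stochastic differential equation $u_h-1=\int_0^h db_s\,u_s$, not from stochastic differentiation alone (which would only give $o(1)$ for the difference). This is legitimate here because Theorem~\ref{stone} has already been established, so the stochastic derivative $b$ of $u$ does satisfy the SDE; you should say so, since Theorem~\ref{flow} is phrased starting from the unitary cocycle side. Second, the interchange ``apply $E_0$ to the $L^1$-limit'' in the second part is justified because $E_0$ is an $L^1$-contraction, and the identity $E_0\circ S_r=E_0$ follows from the facts that $\cA_0$ is the fixed-point algebra of $S$ and that $\tau$ is $S$-invariant; both are routine but should be recorded.

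With these clarifications your plan goes through: the algebraic expansion of $u_h^*xu_h-x$, the $L^2$-convergence of the linear pieces to $b_t^*x+xb_t$ via Section~\ref{differentiation}, and the replacement of $u_{t/N}-1$ by $b_{t/N}$ in the quadratic piece using the $O(h)$ rate and H\"older, together yield the first identity; applying $E_0$ and unwinding the covariance of $b_{1/N}=c_{1/N}+N^{-1}d$ gives the second.
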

From this theorem one easily concludes the following fixed point properties.
\begin{corollary} 
For a projection $e \in \cA_0$ the following are equivalent:
\begin{itemize}
\item[(a)]
$e$ is a fixed point of the Markov process $(\cA, \tau,
(\operatorname{Ad}u) S, \cA_0)$
\item[(b)]
$e$ is a fixed point of the semigroup  $E_0
(\operatorname{Ad}u) E_0$. 
\item[(a')]
The flow $\beta(e)$ vanishes.
\item[(b')]
$L(e)$= 0 
\end{itemize}
\end{corollary}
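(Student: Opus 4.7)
The plan is to close the cycle of implications $(\textup{a}) \Rightarrow (\textup{a}') \Rightarrow (\textup{b}') \Rightarrow (\textup{b}) \Rightarrow (\textup{a})$. Since $\cA_0$ is the fixed point algebra of $S$, any $e \in \cA_0$ satisfies $S_t(e) = e$, so (a) is equivalent to $u_t^* e u_t = e$ and (b) to $E_0(u_t^* e u_t) = e$ for all $t \ge 0$.

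The first two implications are immediate from Theorem \ref{flow}. For $(\textup{a}) \Rightarrow (\textup{a}')$, feeding $x=e$ into the limit formula
$$\beta_t(e) = L^1\text{-}\lim_{N \to \infty} \sum_{j=0}^{N-1} S_{jt/N}\bigl(u_{t/N}^* e\, u_{t/N} - e\bigr)$$
makes every summand vanish under hypothesis (a), so $\beta_t(e) = 0$. For $(\textup{a}') \Rightarrow (\textup{b}')$, apply $E_0$ to $\beta_1(e) = 0$ and invoke the identity $L(e) = E_0(\beta_1(e))$ from the same theorem. Conversely, $(\textup{b}') \Rightarrow (\textup{b})$ by a standard Cauchy-problem argument: both $R_t(e)$ and the constant path $e$ solve $\dot y_t = L y_t$ with initial datum $e$ (here $e$ lies in the domain of $L$, since under the hypothesis of Theorem \ref{structure} the generator $L$ is bounded on $\cA_0$), so they coincide, giving $R_t(e) = e$ for every $t \ge 0$.

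The main obstacle, and the only implication requiring a genuinely new idea, is $(\textup{b}) \Rightarrow (\textup{a})$. I would argue by a projection trick. Set $p_t := u_t^* e\, u_t$; unitarity of $u_t$ together with projectivity of $e$ makes $p_t$ again a projection, and (b) reads $E_0(p_t) = e$. Expanding $(p_t - e)^2 = p_t - p_t e - e p_t + e$ and using that $E_0$ is an $\cA_0$-bimodule map together with $e \in \cA_0$, I obtain
$$E_0\bigl((p_t - e)^2\bigr) = E_0(p_t) - E_0(p_t)\,e - e\,E_0(p_t) + e = e - e - e + e = 0.$$
Faithfulness of $E_0$, inherited from the faithful tracial state $\tau$ on the finite von Neumann algebra $\cA$, then forces $(p_t - e)^2 = 0$, hence $p_t = e$, which is exactly (a). The essential structural input is that the conditional expectation of a projection coincides with a given projection only when the two are equal; this closing move would fail for an arbitrary positive element of $\cA_0$ in place of $e$, which is precisely why the corollary is stated for projections.
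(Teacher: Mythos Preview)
The paper gives no explicit proof of this corollary, only the remark that it ``easily concludes'' from Theorem~\ref{flow}. Your cycle of implications is correct and fills in the details in a natural way. The implications $(\textup{a})\Rightarrow(\textup{a}')\Rightarrow(\textup{b}')\Rightarrow(\textup{b})$ are indeed direct consequences of Theorem~\ref{flow} and the boundedness of $L$ under the standing hypothesis $d\in L^\infty(\cA_0,\tau)$. The closing step $(\textup{b})\Rightarrow(\textup{a})$ via the computation
\[
E_0\bigl((p_t-e)^2\bigr)=E_0(p_t)-E_0(p_t)e-eE_0(p_t)+e=0
\]
and faithfulness of $E_0$ is exactly the standard argument, and you correctly identify that this is the one place where the projection hypothesis on $e$ is indispensable: for a general positive $e\in\cA_0$ one only has $(\textup{a})\Rightarrow(\textup{b})$ without a converse.
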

Let us turn our attention to the case when the semigroup $E_0(u)$ has 
an unbounded generator $d$ as stated in Theorem \ref{dgeneral}. Looking
only at the semigroups $E_0(u)$ and $E_0(\operatorname{Ad}u) E_0$, 
it is not clear how the domains of the generators $d$ and $L$ are
related. But by passing to the level of flows and stochastic
derivatives, the domain of $L$ can be explicitly controlled by the
domain of $d$. Recall that $\cD^{(\infty)}(d) \subset L^\infty(\cA_0,\tau)$.   
\begin{theorem}
Let $u$ be a unitary cocycle for $S$ and let $\eta, \xi \in
\cD^{(\infty)}(d)$. Then
$$
\beta_t(\eta^*x \xi) = L^1\text{-}\lim_{N\to
\infty}\sum_{j=0}^{N-1}S_{jt/N}((u_{t/N}\eta)^*x u_{t/N}\xi -\eta^*x \xi) 
$$ 
exists for any $x \in \cA_0$ and $t \ge 0$. Moreover,
\begin{eqnarray*}
\beta_t(\eta^*x \xi)&=& (B_t(\eta))^* x \xi + \eta^* x B_t(\xi) +
\[B(\eta),x B(\xi)\]_t \quad \text{ and }\\
L(\eta^*x \xi) &=& E_0((B_1(\eta))^* x B_1(\xi)) - (d\eta)^*x d\xi + (d\eta)^*x \xi
+\eta^*x d\xi.  
\end{eqnarray*}
\end{theorem}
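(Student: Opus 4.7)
The approach is to expand the summand algebraically so that the three resulting partial sums are each handled by Theorem~\ref{dgeneral} or by the quadratic-variation theory of Section~\ref{variation}; the formula for $L$ will then follow by applying $E_0$. The underlying identity is
\begin{equation*}
(u_s\eta)^* x\, u_s\xi - \eta^* x \xi = \eta^* x (u_s-1)\xi + ((u_s-1)\eta)^* x \xi + ((u_s-1)\eta)^* x (u_s-1)\xi,
\end{equation*}
valid for any $s \ge 0$. Taking $s = t/N$, applying $S_{jt/N}$, and summing over $j$, the $S$-invariance of $\eta, x, \xi \in \cA_0$ pulls $\eta^*, x, \xi$ outside the first two summands. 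Theorem~\ref{dgeneral} then gives, in $L^2$ and hence in $L^1$, the convergence of the two linear partial sums to $\eta^* x B_t(\xi)$ and $(B_t(\eta))^* x \xi$, using that left/right multiplication by bounded elements of $\cA_0$ is $L^2$-continuous.

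The quadratic remainder
\begin{equation*}
Q_N := \sum_{j=0}^{N-1} S_{jt/N}\bigl(((u_{t/N}-1)\eta)^* x (u_{t/N}-1)\xi\bigr)
\end{equation*}
is the one I would identify with $\[B(\eta), x B(\xi)\]_t$. Since $x \in \cA_0$ is $S$-fixed, $x B(\xi)$ is again an additive $L^2$-cocycle, and the cocycle increment identity $B_{(j+1)t/N}(\eta) - B_{jt/N}(\eta) = S_{jt/N}(B_{t/N}(\eta))$ together with its analogue for $x B(\xi)$ combine with Section~\ref{variation} to give
\begin{equation*}
\sum_{j=0}^{N-1} S_{jt/N}\bigl(B_{t/N}(\eta)^* x B_{t/N}(\xi)\bigr) \to \[B(\eta), x B(\xi)\]_t \quad \text{in } L^1.
\end{equation*}
The task then reduces to showing that the discrepancy between $Q_N$ and this Riemann sum vanishes in $L^1$, and this is the step I expect to be the main obstacle. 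The plan is to telescope the pointwise difference $((u_s-1)\eta)^* x ((u_s-1)\xi) - B_s(\eta)^* x B_s(\xi)$ using the Hölder bound $\Nm{a^* x b}_1 \le \Nm{a}_2 \Nm{x}_\infty \Nm{b}_2$, and to exploit an $L^2$-norm comparison of the form $\Nm{(u_s-1)\eta - B_s(\eta)}_2 = o(\sqrt{s})$ (and the analogue for $\xi$) together with the $O(\sqrt{s})$ growth of the factors $(u_s-1)\eta$ and $B_s(\eta)$; summing the resulting $o(s)$ per-term bound over the $N = t/s$ terms then gives a total $L^1$-bound tending to zero. This refined $o(\sqrt{s})$ comparison should be extracted from the analytic-vector estimates underlying Theorem~\ref{dgeneral}, using $\eta, \xi \in \cD^{(\infty)}(d)$ to obtain uniform Taylor bounds of the form $\Nm{(u_s-1)\eta - s\,d\eta}_2 = O(s)$ and $\Nm{B_s(\eta) - s\,d\eta}_2 = O(s)$.

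For the formula for $L$, I would apply $E_0$ at $t=1$ to the expansion $\beta_t(\eta^*x\xi) = (B_t(\eta))^* x \xi + \eta^* x B_t(\xi) + \[B(\eta), x B(\xi)\]_t$. Commuting $E_0$ through $S_{jt/N}$ (both preserve $\cA_0$) and passing to the limit in Theorem~\ref{dgeneral} yields the drift identity $E_0(B_t(\eta)) = t\, d\eta$, using $\tfrac{1}{s}(E_0(u_s) - 1)\eta \to d\eta$ for $\eta \in \cD(d)$. Thus the centred part of $B(\eta)$ is $c_\eta := B(\eta) - t\, d\eta$, and $\[B(\eta), x B(\xi)\] = \[c_\eta, x\, c_\xi\]$ since drifts do not contribute to quadratic variation. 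The integration-by-parts identity of Section~\ref{variation}, combined with the vanishing of $E_0$ on stochastic integrals of centred cocycles, gives $E_0(\[c_\eta, x\, c_\xi\]_1) = E_0(c_\eta(1)^* x\, c_\xi(1))$; substituting $c_\eta(1) = B_1(\eta) - d\eta$ and $c_\xi(1) = B_1(\xi) - d\xi$ produces $E_0(B_1(\eta)^* x B_1(\xi)) - (d\eta)^* x\, d\xi$. Adding the linear contributions $(d\eta)^* x\xi + \eta^* x\, d\xi$ then gives the claimed formula for $L(\eta^* x \xi)$.
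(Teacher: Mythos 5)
Your overall architecture is the natural one: the three\nobreakdash-term expansion of $(u_s\eta)^*x\,u_s\xi-\eta^*x\xi$, the identification of the two linear partial sums with $(B_t(\eta))^*x\xi$ and $\eta^*x B_t(\xi)$ via Theorem~\ref{dgeneral} and boundedness of the $\cA_0$-bimodule action on $L^2$, and the derivation of the formula for $L$ by applying $E_0$ at $t=1$ (using $E_0(B_t(\eta))=t\,d\eta$, the insensitivity of $\[\,\cdot,\cdot\,\]$ to drifts, and $E_0(\[c_\eta,x c_\xi\]_1)=E_0(c_\eta(1)^*x\,c_\xi(1))$ from the measure property of centred cocycles). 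The paper, being a survey, supplies no proof of this theorem, but this is surely the intended skeleton, and the $L$-formula computation is correct as far as it goes.

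The genuine gap is in the quadratic remainder $Q_N$, which you rightly flag as the main obstacle, because the Taylor bounds you propose to close it with are false. Both $\Nm{(u_s-1)\eta}_2$ and $\Nm{B_s(\eta)}_2$ are of exact order $s^{1/2}$, not $s$: indeed
$$
\Nm{(u_s-1)\eta}_2^2=\tau\bigl(\eta^*(2-E_0(u_s)-E_0(u_s)^*)\eta\bigr)=-s\,\tau\bigl(\eta^*(d+d^*)\eta\bigr)+o(s),
$$
while the centred part $c_s(\eta)$ of $B_s(\eta)$ satisfies $\Nm{c_s(\eta)}_2^2=s\,\Nm{c_1(\eta)}_2^2$ exactly. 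Hence $\Nm{(u_s-1)\eta-s\,d\eta}_2$ and $\Nm{B_s(\eta)-s\,d\eta}_2$ are both comparable to $s^{1/2}$ (except in the degenerate differentiable case where the theorem is trivial), and the triangle inequality through the common drift $s\,d\eta$ yields only $\Nm{(u_s-1)\eta-B_s(\eta)}_2=O(s^{1/2})$; fed into your telescoping this gives an $O(1)$ total bound, not $o(1)$. The estimate you actually need, $\Nm{(u_s-1)\eta-B_s(\eta)}_2=o(s^{1/2})$, is very plausibly true (classically the discrepancy is a centred second-order term of the type $\tfrac12(W_s^2-s)$, of $L^2$-norm $O(s)$), but it cannot be reached through the drift: one must compare the fluctuation parts of $u_s-1$ and $B_s$ directly, e.g.\ by computing the mixed moment $E_0\bigl(((u_s-1)\eta)^*B_s(\xi)\bigr)$ to first order in $s$ from the defining Riemann sums, the cocycle identity and the independence structure of the white noise, or by the Cauchy-type doubling argument underlying the proof of Theorem~\ref{dgeneral}. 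That comparison is the real analytic content of the statement, and it is exactly what your proposal leaves unproved.
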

This theorem can be refined to give explicit information on the
domain of $L$ on the level of $L^p$-spaces for 
$1\le p < \infty$. Let us illustrate this:

\begin{corollary}
If $1 \in \cD^{(\infty)}(d)$ is an analytic vector for the generator
$d$ of the semigroup $E_0(u)$ then
$$
L(x) = E_0(b_1^* x b_1) -d^*x d + d^*x + x d  
$$
holds for any $x \in \cA_0$. Moreover, the generator $L$ of the
semigroup $E_0 (\operatorname{Ad} u) E_0$ maps $\bigcap_{p<\infty} L^p(\cA_0,\tau)$
into $\bigcap_{p < \infty} L^p(\cA_0,\tau)$.
\end{corollary}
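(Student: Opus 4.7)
The plan is to deduce the corollary by specialising the preceding theorem to $\eta = \xi = 1$ and then bounding the resulting expression term by term in each $L^p$-space.

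For the formula, the analyticity hypothesis guarantees $1 \in \cD^{(\infty)}(d)$, so the preceding theorem applies with $\eta = \xi = 1$. The approximating sum $\sum_{j=0}^{N-1} S_{jt/N}(u_{t/N}-1)\cdot 1$ defining $B_t(1)$ coincides with the sum that, by the analytic-vector version of stochastic differentiation, converges to the additive cocycle $b_t$ in every $L^p$-norm; uniqueness of $L^2$-limits then gives $B_1(1) = b_1$. Substituting $\eta = \xi = 1$ into the identity
$$L(\eta^* x \xi) = E_0((B_1(\eta))^* x B_1(\xi)) - (d\eta)^* x d\xi + (d\eta)^* x \xi + \eta^* x d\xi,$$
and using $d \cdot 1 = d$, yields the asserted formula for $L(x)$.

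For the mapping property, the analytic-vector condition $\sum_{n\ge 0} (n!)^{-1} \Nm{d^n}_p < \infty$ (in each relevant $L^p$-norm) forces in particular $d \in \bigcap_{1 \le p < \infty} L^p(\cA_0, \tau)$, while the analytic-vector stochastic differentiation theorem quoted earlier provides $b_1 \in \bigcap_{p < \infty} L^p(\cA, \tau)$. Given $x \in \bigcap_{p < \infty} L^p(\cA_0, \tau)$, repeated application of H\"older's inequality places each of $d^*x$, $xd$, $d^*xd$ and $b_1^* x b_1$ in every $L^p$-space; contractivity of the conditional expectation $E_0$ on each $L^p$ then gives $E_0(b_1^* x b_1) \in \bigcap_{p < \infty} L^p(\cA_0, \tau)$, and summing the four terms shows $L(x) \in \bigcap_{p < \infty} L^p(\cA_0, \tau)$.

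The main obstacle I anticipate is interpreting the analytic-vector hypothesis so that it yields simultaneously $d \in L^p$ and $b_1 \in L^p$ for \emph{every} $p < \infty$; the former is a direct consequence of power-series convergence in each $L^p$-norm, while the latter is furnished by the analytic-vector version of the stochastic differentiation theorem. Once both integrability statements are in hand, the remainder is straightforward H\"older bookkeeping.
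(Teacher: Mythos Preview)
Your proposal is correct and is exactly the argument the paper intends: the corollary is stated without proof in this survey, and as an immediate consequence of the preceding theorem your specialisation $\eta=\xi=1$ (together with the analytic-vector stochastic differentiation theorem to supply $b_1\in\bigcap_{p<\infty}L^p$ and $d\in\bigcap_{p<\infty}L^p$) is the natural derivation. The interpretive caveat you flag about the analytic-vector hypothesis covering all $L^p$-norms is indeed left implicit in the paper, but your reading is the one required for the stated conclusion.
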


\bibliographystyle{amsalpha}

\end{document}